\numberwithin{equation}{section} 
\numberwithin{figure}{section}
\newtheorem{theorem}[equation]{Theorem}
\newtheorem{lemma}[equation]{Lemma}
\newtheorem{proposition}[equation]{Proposition}
\newtheorem{corollary}[equation]{Corollary}
\theoremstyle{definition}
\newtheorem{definition}[equation]{Definition}
\theoremstyle{remark}
\newtheorem{remark}[equation]{Remark}
\newtheorem{example}[equation]{Example}
\newenvironment{enumroman}
{\begin{enumerate}}
{\end{enumerate}}
\DeclareMathOperator{\Beldeg}{Beldeg} 
\newcommand{\Bdeg}{\Beldeg}
\DeclareMathOperator{\Gal}{Gal} 
\DeclareMathOperator{\Aut}{Aut}
\DeclareMathOperator{\Spec}{Spec}
\DeclareMathOperator{\PSL}{PSL}
\DeclareMathOperator{\ord}{ord}
\DeclareMathOperator{\GL}{GL}
\DeclareMathOperator{\PGL}{PGL}
\DeclareMathOperator{\opdiv}{div}
\DeclareMathOperator{\Isom}{Isom}
\newcommand{\Qbar}{\overline{\mathbb{Q}}}
\newcommand{\Q}{\QQ}
\newcommand{\QQbar}{\Qbar}
\newcommand{\scrL}{\mathscr{L}}
\newcommand\FF{\mathbb{F}}
\newcommand\PP{\mathbb{P}}
\newcommand\ZZ{\mathbb{Z}}
\newcommand\QQ{\mathbb{Q}}
\newcommand\RR{\mathbb{R}}
\newcommand\CC{\mathbb{C}}
\newcommand\Z{\mathbb{Z}}
\newcommand\scrO{\mathscr{O}}
\newcommand\Hzero{\mathrm{H}^0}
\newcommand{\Htwost}{\textup{\textbf{\textsf{H}}}^{2*}}
\newcommand{\p}{\mathbb{P}}
\DeclareMathOperator{\Bel}{Bel}
\renewcommand{\leq}{\leqslant}
\renewcommand{\geq}{\geqslant}
\newcommand{\Belyi}{Bely\u{\i}}
\newcommand{\defi}[1]{\textsf{#1}} 	
\title{The Belyi degree of a curve is computable}
\author{Ariyan Javanpeykar}
\address{Mathematical Institute \\ Johannes-Gutenberg University\\
Mainz, Germany}
\email{peykar@uni-mainz.de}
\author{John Voight}
\address{Department of Mathematics,
  Dartmouth College, 6188 Kemeny Hall, Hanover, NH 03755, USA}
\email{jvoight@gmail.com}
\subjclass{11G32, 11Y40}
\keywords{\Belyi\ map, \Belyi\ degree, algorithm, effectively computable, Riemann-Roch space, moduli space of curves}
\begin{document}

\begin{abstract}
We exhibit an algorithm that, given input a curve $X$ over a number field, computes as output the minimal degree of a \Belyi\ map $X \to \PP^1$.  We discuss in detail the example of the Fermat curve of degree $4$ and genus $3$.
\end{abstract}
 
\maketitle

\section{Introduction}
Let $\Qbar \subset \CC$ be the algebraic closure of $\QQ$ in $\CC$.  Let $X$ be a smooth projective connected curve over $\Qbar$; we call $X$ just a \defi{curve}.  \Belyi\ proved \cite{Belyi1,Belyi2} that there exists a finite morphism $\phi\colon X\to \p^1_{\Qbar}$ unramified away from $\{0,1,\infty\}$; we call such a map $\phi$ a \defi{\Belyi\ map}.  

Grothendieck applied \Belyi's theorem to show that the action of the absolute Galois group of $\mathbb Q$ on the set of   dessins d'enfants is faithful \cite[Theorem 4.7.7]{Szamuely}.  This observation began a flurry of activity  \cite{Schneps}: for instance, the theory of dessins d'enfants was used to show that  the action of the Galois group of $\mathbb Q$ on the set of connected components of the coarse moduli space of surfaces of general type  is  faithful   \cite{BCG, Torres}.  Indeed, the applications of \Belyi's theorem are vast.

In this paper, we consider \Belyi\ maps from the point of view of algorithmic number theory.  We define the \defi{\Belyi\ degree} of $X$, denoted by $\Beldeg(X) \in \Z_{\geq 1}$, to be the minimal degree of a \Belyi\ map $X\to \p^1_{\Qbar}$.  
This integer appears naturally    in Arakelov theory,    the study of rational points on curves, and computational aspects of algebraic curves \cite{BiSt, J, JvK, SijslingVoight}.   It was defined and studied first by 
Li\c{t}canu \cite{Litcanu}, whose work suggested that the \Belyi\ degree behaves like a height.

The aim of this paper is to show that the \Belyi\ degree is an effectively computable invariant of the curve $X$.

\begin{theorem} \label{mainresult}
There exists an algorithm that, given as input a curve $X$ over $\Qbar$, computes as output the \Belyi\ degree $\Beldeg(X)$.
\end{theorem}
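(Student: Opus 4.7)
The plan proceeds in two stages: first derive an effective upper bound on $\Beldeg(X)$, then for each candidate degree $d$ below that bound, algorithmically decide whether $X$ admits a \Belyi\ map of degree $d$.

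For the upper bound, I would implement an effective version of \Belyi's theorem. Given explicit equations for $X$, start from any non-constant $t \in \Qbar(X)$, whose branch locus $S \subset \PP^1(\Qbar)$ is a computable finite set. \Belyi's proof is constructive: apply a $\Qbar$-linear fractional transformation together with Galois-averaging polynomials to move $S$ into $\PP^1(\QQ)$, and then iterate the classical polynomial trick $x \mapsto x^m(1-x)^n$ until only the three branch values $\{0,1,\infty\}$ remain. The degree $D$ of the resulting \Belyi\ map gives an explicit, computable upper bound $\Beldeg(X) \leq D$.

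For the decision step at a fixed $d$, I would enumerate degree-$d$ \Belyi\ maps combinatorially. By the Riemann Existence Theorem, isomorphism classes of degree-$d$ \Belyi\ maps are in bijection with $S_d$-conjugacy classes of triples $(\sigma_0, \sigma_1, \sigma_\infty) \in S_d^3$ with $\sigma_0 \sigma_1 \sigma_\infty = 1$ generating a transitive subgroup—a finite, explicitly enumerable set. For each such triple $\sigma$, the Riemann-Hurwitz formula reads off the genus of the source curve $X_\sigma$, so I discard those not giving genus $g(X)$. For each surviving $\sigma$, I would compute an explicit model over $\Qbar$ of the cover $X_\sigma \to \PP^1_{\Qbar}$ using the algorithms in the literature on computing \Belyi\ maps (for instance numerical or $p$-adic approximation of the cover followed by algebraic recognition of coefficients via LLL, as in the Sijsling-Voight framework), and then decide whether $X_\sigma \cong_{\Qbar} X$. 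Isomorphism of two explicitly given curves over $\Qbar$ is itself an effective problem: compute canonical (or some standard) models for both curves, compare discrete geometric invariants, and when these agree, search for an explicit isomorphism by solving a polynomial system. The Belyi degree is the least $d \leq D$ for which some $X_\sigma$ matches $X$.

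The main obstacle is the third step: explicit computation of the curve $X_\sigma$ from the purely combinatorial datum $\sigma$, i.e.\ algebraic descent from a topological or analytic cover to an honest model defined over $\Qbar$. This is the technically deepest ingredient, since it requires going from a transcendental object to polynomial equations over a number field, and certified computation is delicate. However, it is exactly the problem addressed by existing algorithms for computing \Belyi\ maps, so Theorem~\ref{mainresult} amounts to assembling these tools—together with the effective \Belyi\ upper bound and an effective test for curve isomorphism—into a single uniform procedure.
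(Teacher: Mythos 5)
Your overall architecture---bound the degree, then for each candidate $d$ enumerate permutation triples and test whether some cover $X_\sigma$ is isomorphic to $X$---is the same shape as the paper's second proof of Theorem \ref{mainresult:really}, and your effective-isomorphism step is essentially Lemma \ref{lemma:alg_for_isoms0} and Corollary \ref{cor:alg_for_isoms}. (The upper bound from effective \Belyi/Khadjavi is harmless but not needed: one can simply loop over $d=1,2,\dots$ and stop at the first success, termination being guaranteed by \Belyi's theorem.) However, there is a genuine gap at exactly the step you flag as the deepest one: producing a model of $X_\sigma \to \PP^1_{\Qbar}$ from the combinatorial datum $\sigma$. You invoke ``existing algorithms for computing \Belyi\ maps'' (numerical or $p$-adic approximation followed by LLL recognition, as in the Sijsling--Voight survey), but those are practical methods, not algorithms in the sense required by the theorem: the lattice-recognition step has no proven correctness or termination guarantee without an a priori height bound on the coefficients of the model, and no such effective bound in terms of $d$ is known. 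So as written, the existence of the algorithm is being deduced from a tool whose own algorithmic status is precisely what is in question; the proof does not close.

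The paper repairs this in two different ways, either of which you could adopt. The first proof avoids computing covers from monodromy altogether: it writes explicit polynomial equations, via a Riemann--Roch basis of $\Hzero(X,\scrL^{\otimes t})$ and divisor conditions \eqref{bel}, whose $\Qbar$-solutions are exactly the \Belyi\ maps on the \emph{given} curve $X$ of degree $d$ and ramification type $\lambda$ (Propositions \ref{prop:claim} and \ref{prop:eqns}), and then decides solvability over $\Qbar$ by Gr\"obner bases and the Nullstellensatz (Lemma \ref{lem:elmn}). The second proof keeps your enumeration-by-monodromy viewpoint but replaces the numerical computation of $X_\sigma$ by a brute-force, exactly verified enumeration: count the permutation triples up to simultaneous conjugation with the given $d$ and $\lambda$, then enumerate all (number field, curve, rational function) triples via Riemann--Roch bases, verify degree and ramification type exactly, discard isomorphic repetitions using the isomorphism test, and stop once the known count is reached; only then compare the source curves with $X$. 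If you want to keep your route, you must supply an analogous exhaustion-and-verification argument (or an effective height bound for \Belyi\ maps of degree $d$) to certify that the model-computation step terminates with provably correct output.
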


The input curve $X$ is specified by equations in projective space with coefficients in a number field.  In fact, the resulting equations need only provide a birational model for $X$, as one can then effectively compute a smooth projective model birational to the given one.

In the proof of his theorem, \Belyi\ provided an algorithm that, given as input a finite set of points $B\subset \p^1(\Qbar)$, computes a \Belyi\ map $\phi\colon \p^1_\Q\to \p^1_\Q$ (defined over $\Q$) such that $\phi(B)\subseteq \{0,1,\infty\}$.  Taking $B$ to be the ramification set of any finite map $X \to \p^1_{\Qbar}$, it follows that there is an algorithm that, given as input a curve $X$ over $\Qbar$, computes as output an \emph{upper bound} for $\Beldeg(X)$.  Khadjavi \cite{Khadjavi} has given an explicit such upper bound---see Proposition \ref{prop: khad} for a precise statement.  So at least one knows that the \Belyi\ degree has a computable upper bound.  However, neither of these results give a way to compute the \Belyi\ degree: what one needs is the ability to test if a curve $X$ has a \Belyi\ map of a given degree $d$.  Exhibiting such a test is the content of this paper, as follows.

A \defi{partition triple} of $d$ is a triple of partitions $\lambda=(\lambda_0,\lambda_1,\lambda_\infty)$ of $d$.  The ramification type associates to each isomorphism class of \Belyi\ map of degree $d$ a partition triple $\lambda$ of $d$.  

\begin{theorem} \label{mainresult:really}  
There exists an algorithm that, given as input a curve $X$ over $\Qbar$, an integer $d\geq 1$ and a partition triple $\lambda$ of $d$, determines if there exists a \Belyi\ map $\phi\colon X \to \PP^1_{\Qbar}$ of degree $d$ with ramification type $\lambda$; and, if so, gives as output a model for such a map $\phi$.  
\end{theorem}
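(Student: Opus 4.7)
My plan is to set up directly, from the input, a zero-dimensional polynomial system over a number field whose $\Qbar$-points parameterize the desired Belyi maps on $X$, and to decide (and on success solve) the system by Gröbner basis.

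Since $X$ is presented by explicit equations over a number field $K \subset \Qbar$, I can algorithmically compute with divisors, Riemann-Roch spaces, and local uniformizers on $X$ (for instance via Hess's algorithm). Writing $r_\bullet$ for the number of parts of $\lambda_\bullet$, a Belyi map of degree $d$ and type $\lambda$ on $X$ is specified by three effective divisors $D_\bullet = \sum_i \lambda_{\bullet,i} P_{\bullet,i}$ of shapes $\lambda_\bullet$ for $\bullet \in \{0, 1, \infty\}$, together with a rational function $\phi$ on $X$ satisfying $\Div(\phi) = D_0 - D_\infty$ and $\Div(\phi - 1) = D_1 - D_\infty$. I would introduce algebraic unknowns for (i) the $r_\bullet$ support points of each $D_\bullet$, each constrained to lie on $X$ via its defining equations, and (ii) the coefficients of $\phi$ in a fixed basis of an ambient Riemann-Roch space $L(A)$, where $A$ is an auxiliary effective divisor of sufficiently large degree (e.g.\ a high multiple of a hyperplane section of $X$) chosen so that $L(D_\infty) \subseteq L(A)$ for every candidate pole divisor. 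The conditions that $\phi$ have polar part at most $D_\infty$, that $\phi$ vanish to order at least $\lambda_{0,j}$ at $P_{0,j}$, and that $\phi - 1$ vanish to order at least $\lambda_{1,k}$ at $P_{1,k}$ all translate into polynomial equations in these unknowns via Taylor expansions in local uniformizers.

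By the Riemann–Hurwitz identity $r_0 + r_1 + r_\infty = d + 2 - 2g$, any solution automatically has total ramification degree exactly $2g - 2 + 2d$ above $\{0, 1, \infty\}$, hence is unramified elsewhere and is a genuine Belyi map of the exact type $\lambda$. Because the Hurwitz space of Belyi maps of type $\lambda$ is a finite étale cover of the moduli space of three ordered marked points on $\PP^1$ (a single $\Qbar$-point), the solution locus is zero-dimensional after quotienting by the obvious finite symmetries permuting equal parts of each $\lambda_\bullet$ and by the scaling $\phi \mapsto c\phi$ — the latter killed by normalizing one coefficient. A Gröbner basis computation over $K$ then decides whether the system has $\Qbar$-solutions and, if so, exhibits one defined over an explicit finite extension $L/K$, from which we recover $\phi$ as a rational function on $X$; if not, no such Belyi map exists.

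The main obstacle is the bookkeeping required to encode "ramification of exactly type $\lambda$" as polynomial equations while both the support points of the $D_\bullet$ and the coefficients of $\phi$ are moving simultaneously. The key trick is to work in formal or étale charts of $X$ around each variable support point, use explicit uniformizers there to extract the leading Taylor coefficients of $\phi$ (and of $\phi - 1$), and impose the vanishing of those coefficients to the prescribed order. With that bookkeeping done, the remaining ingredients — Hess's algorithm for bases of Riemann-Roch spaces, Gröbner bases over $K$, and algebraic extraction of a solution — are standard. An alternative, complementary approach is to enumerate triples $(\sigma_0, \sigma_1, \sigma_\infty) \in S_d^3$ of cycle type $\lambda$ with $\sigma_0 \sigma_1 \sigma_\infty = 1$ and transitive image, compute a source curve for each via the direct method for Belyi maps, and test isomorphism with $X$; this reduces matters to known algorithmic subroutines but pays the full cost of "computing Belyi maps" for each candidate triple.
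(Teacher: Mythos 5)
Your overall architecture (represent the unknown map via a computable Riemann--Roch space, impose vanishing to prescribed orders at unknown points via Taylor coefficients, use Riemann--Hurwitz to rule out extra ramification, then decide solvability by Gr\"obner bases/Nullstellensatz) is the same as the paper's first proof, but there is a genuine gap in your parameterization of $\phi$. You put the coefficients of $\phi$ itself in a \emph{fixed} space $\mathrm{H}^0(X,\mathscr{O}_X(A))$ with $A$ ``chosen so that $L(D_\infty)\subseteq L(A)$ for every candidate pole divisor.'' No such fixed $A$ exists: $L(D_\infty)\subseteq L(A)$ forces $D_\infty\leq A$ (for $\deg D_\infty=d\geq 2g$ one finds, by Riemann--Roch, a function in $L(D_\infty)$ with pole of exact order $\lambda_{\infty,i}$ at each $P_{\infty,i}$, so each such point must occur in $A$ with at least that multiplicity), while the pole divisor of the sought Belyi map has \emph{a priori} unknown, moving support; a divisor supported on a hyperplane section, however large its degree, captures only those $\phi$ whose poles happen to lie in that support. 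So your system can fail to have solutions even when a Belyi map of type $\lambda$ exists, and the algorithm would wrongly answer ``no.'' This representability issue is exactly what the paper's Lemma \ref{lem:m} addresses: one writes $\phi=a/b$ as a \emph{ratio} of two sections of a fixed computable space $\mathrm{H}^0(X,\scrL^{\otimes t})$, at the cost that $a$, $b$, and $a-b$ acquire a common ``cancelling'' divisor $E$ of degree $t\deg\scrL-d$; the divisor conditions \eqref{bel} of Proposition \ref{prop:claim} therefore involve additional unknown points $Y_1,\dots,Y_s$ and a loop over partitions $\mu$ of that excess degree. Your proposal has none of this, and it is not optional bookkeeping---it is the step that makes the finite-dimensional encoding exhaustive.

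A second, smaller point: you never say how you force the support points of each $D_\bullet$ to be pairwise distinct. This must be imposed polynomially (e.g.\ by Rabinowitsch-type variables, or the configuration-space construction of Lemma \ref{lem: configuration}): if two points of $D_0$ are allowed to collide, the imposed conditions only give vanishing to order $\max(\lambda_{0,i},\lambda_{0,j})$ rather than $\lambda_{0,i}+\lambda_{0,j}$, the count $\deg\opdiv_0(\phi)\geq d$ breaks down, and your claim that ``any solution automatically \dots is a genuine Belyi map of the exact type $\lambda$'' fails, so spurious solutions (maps of smaller degree or different type) could make the algorithm answer ``yes'' incorrectly. One should also check the Riemann--Hurwitz identity $r_0+r_1+r_\infty=d+2-2g$ up front and return ``no'' when it fails, since otherwise a solution of your system could carry extra ramification away from $\{0,1,\infty\}$. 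Your alternative sketch (enumerate permutation triples and test isomorphism with $X$) is in the spirit of the paper's second proof, but as stated it leans on ``the direct method'' as a certified subroutine for producing the cover attached to a triple; the paper instead avoids this by a brute-force enumeration of all maps over all number fields with the triple count as stopping criterion, combined with the effective isomorphism test of Lemma \ref{lemma:alg_for_isoms0} and Corollary \ref{cor:alg_for_isoms}.
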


Theorem \ref{mainresult:really} implies Theorem \ref{mainresult}: for each $d \geq 1$, we loop over partition triples $\lambda$ of $d$ and we call the algorithm in Theorem \ref{mainresult:really}; we terminate and return $d$ when we find a map.

The plan of this paper is as follows. In section \ref{section: Belyi degree}, we begin to study the \Belyi\ degree and gather some of its basic properties. For instance, we observe that, for all odd $d\geq 1$, there is a curve of \Belyi\ degree $d$. We also recall Khadjavi's effective version of Belyi's theorem.  In section \ref{section: proof}, we prove Theorem \ref{mainresult:really} by exhibiting equations for the space of Belyi maps on a curve with given degree and ramification type: see Proposition \ref{prop:eqns}.  These equations can be computed in practice, but unfortunately in general it may not be practical to detect if they have a solution over $\Qbar$.  In section \ref{sec:secondproof}, we sketch a second proof, which is much less practical but still proves the main result.  Finally, in section \ref{section:fermat} we discuss in detail the example of the Fermat curve $x^4+y^4=z^4$ of genus $3$.

The theory of \Belyi\ maps in characteristic $p>0$ is quite different,
and our main results rely fundamentally on the structure of the
fundamental group of $\mathbb{C}\setminus\{0,1\}$, so we work over
$\Qbar$ throughout.  However, certain intermediate results, including
Lemma \ref{lemma:alg_for_isoms0}, hold over a general field.

\subsection*{Acknowledgements} This note grew out of questions asked to the authors by Yuri Bilu, Javier Fres\'an,  David Holmes, and Jaap Top, and the authors are grateful for these comments.  The authors also wish to thank Jacob Bond, Michael Musty, Sam Schiavone, and the anonymous referee for their feedback.  Javanpeykar gratefully acknowledges support from SFB Transregio/45.  Voight was supported by an NSF CAREER Award (DMS-1151047) and a Simons Collaboration Grant (550029).

\section{The Belyi degree}\label{section: Belyi degree}

In this section, we collect basic properties of the \Belyi\ degree.  Throughout, a \defi{curve} $X$ is a smooth projective connected variety of dimension $1$ over $\Qbar$; we denote its genus by $g=g(X)$. We write $\p^n$ and $\mathbb A^n$ for the schemes $\p^n_{\Qbar}$ and $\mathbb A^n_{\Qbar}$, respectively.  A \defi{\Belyi\ map} on $X$ is a finite morphism $X \to \p^1$ unramified away from $\{0,1,\infty\}$.  Two \Belyi\ maps $\phi\colon X \to \p^1$ and $\phi'\colon X' \to \p^1$ are \defi{isomorphic} if there exists an isomorphism $i\colon X \xrightarrow{\sim} X'$ such that $\phi' \circ i = \phi$.
For $d \geq 1$, define $\Bel_d(X)$ to be the set of isomorphism classes of \Belyi\ maps of degree $d$ on $X$, and let $\Bel(X) \colonequals \bigcup_d \Bel_d(X)$.  

\begin{definition}\label{def:belyi_deg}
The \defi{\Belyi\ degree} of $X$, denoted $\Beldeg(X) \in \Z_{\geq 1}$, is the minimal degree of a \Belyi\ map on $X$.
\end{definition}

In our notation, the \Belyi\ degree of $X$ is the smallest positive integer $d$ such that $\Bel_d(X)$ is non-empty.

\begin{lemma} \label{lem:fin}
Let $C\in \RR_{\geq 1}$. Then
  the set of isomorphism classes of curves $X$ with $\Beldeg(X)\leq C$ is finite.
\end{lemma}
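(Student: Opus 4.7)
The plan is to reduce the finiteness statement to the classification of finite étale covers of $\PP^1 \setminus \{0,1,\infty\}$. Since $\Beldeg(X) \leq C$ means $X$ admits a \Belyi\ map of degree $d$ for some $1 \leq d \leq C$, it suffices to show that for each fixed $d$, the set of isomorphism classes of pairs $(X,\phi)$, where $\phi \colon X \to \PP^1$ is a \Belyi\ map of degree $d$, is finite; then the forgetful map $(X,\phi) \mapsto X$ has finite image for each $d$, and we take the union over $d \leq C$.

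First I would invoke Riemann's existence theorem (or equivalently Grothendieck's Galois theory for the étale fundamental group of $\PP^1_{\Qbar}\setminus\{0,1,\infty\}$, compared via the comparison isomorphism with the topological fundamental group of $\PP^1(\CC)\setminus\{0,1,\infty\}$, which is free on two generators). This identifies isomorphism classes of finite connected étale covers of degree $d$ of $\PP^1 \setminus \{0,1,\infty\}$ with conjugacy classes in $S_d$ of transitive triples $(\sigma_0,\sigma_1,\sigma_\infty) \in S_d^3$ satisfying $\sigma_0\sigma_1\sigma_\infty = 1$. There are only finitely many such triples in $S_d^3$ and hence only finitely many conjugacy classes.

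Next I would observe that a \Belyi\ map $\phi \colon X \to \PP^1$ of degree $d$ restricts to a finite connected étale cover of $\PP^1 \setminus \{0,1,\infty\}$, and conversely every such étale cover extends uniquely to a finite morphism from the smooth projective completion to $\PP^1$ by normalization in the function field; this bijection respects the notion of isomorphism of \Belyi\ maps defined in the excerpt. Consequently the set of isomorphism classes of \Belyi\ maps of degree $d$ (with varying source $X$) is finite.

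Finally, since $X$ is determined up to isomorphism by any \Belyi\ map on it, the set of isomorphism classes of curves $X$ admitting a \Belyi\ map of degree exactly $d$ is finite, and taking the union over the finitely many integers $d$ with $1 \leq d \leq C$ finishes the proof. There is no real obstacle here — the content is entirely in Riemann's existence theorem plus the trivial observation that $S_d^3$ is finite.
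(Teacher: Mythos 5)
Your proof is correct and follows essentially the same route as the paper: the paper's argument is exactly the monodromy (Riemann existence) bijection between isomorphism classes of degree-$d$ \Belyi\ maps and permutation triples in $S_d$ up to simultaneous conjugation, of which there are finitely many, followed by summing over $d \leq C$. Your write-up simply spells out the étale-cover/normalization details that the paper leaves implicit.
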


For an upper bound on the number of isomorphism classes of curves $X$ with $\Beldeg(X)\leq C$ we refer to Li\c{t}canu \cite[Th\'eor\`eme 2.1]{Litcanu}. 

\begin{proof}
The monodromy representation provides a bijection between isomorphism classes of \Belyi\ maps of degree $d$ and permutation triples from $S_d$ up to simultaneous conjugation; and there are only finitely many of the latter for each $d$.  Said another way: the (topological) fundamental group of $\p^1(\CC) \smallsetminus \{0,1,\infty\}$ is finitely generated, and so there are only finitely many conjugacy classes of subgroups of bounded index.
\end{proof}

\begin{remark}
One may also restrict to $X$ over a number field $K \subseteq \Qbar$ and ask for the minimal degree of a \Belyi\ map defined over $K$: see Zapponi \cite{Zapponi} for a discussion of this notion of relative \Belyi\ degree.
\end{remark}

Classical modular curves have their \Belyi\ degree bounded above by the index of the corresponding modular group, as follows.

\begin{example}
Let $\Gamma \leq \PSL_2(\ZZ)$ be a finite index subgroup, and let $X(\Gamma) \colonequals \Gamma \backslash \Htwost$ where $\Htwost$ denotes the completed upper half-plane.  Then $\Bdeg(X(\Gamma)) \leq [\PSL_2(\ZZ):\Gamma]$, because the natural map $X(\Gamma) \to X(1) = \PSL_2(\ZZ) \backslash \Htwost \xrightarrow{\sim} \PP^1_{\CC}$ descends to $\QQbar$ and defines a \Belyi\ map, where the latter isomorphism is the normalized modular $j$-invariant $j/1728$.
\end{example}

A lower bound on the \Belyi\ degree may be given in terms of the genus, as we show now.

\begin{proposition}\label{prop:rh}
For every curve $X$, the inequality $\Beldeg(X) \geq 2g(X)+1$ holds.
\end{proposition}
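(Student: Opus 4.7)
The plan is to apply the Riemann--Hurwitz formula to a minimal-degree \Belyi\ map and bound the ramification below using only the fact that each of the three special points has at least one preimage.

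More precisely, fix a \Belyi\ map $\phi\colon X \to \PP^1$ of degree $d = \Beldeg(X)$. Since $\phi$ is ramified only over $\{0,1,\infty\}$, for each $t \in \{0,1,\infty\}$ one has
\[
\sum_{P \in \phi^{-1}(t)} (e_P - 1) = d - \#\phi^{-1}(t),
\]
where $e_P$ denotes the ramification index of $\phi$ at $P$. Riemann--Hurwitz (using $g(\PP^1) = 0$) then yields
\[
2g(X) - 2 = -2d + \sum_{t \in \{0,1,\infty\}} \bigl(d - \#\phi^{-1}(t)\bigr) = d - \sum_{t \in \{0,1,\infty\}} \#\phi^{-1}(t).
\]
Rearranging gives the identity $d = 2g(X) - 2 + \#\phi^{-1}(0) + \#\phi^{-1}(1) + \#\phi^{-1}(\infty)$.

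To conclude, I would note that $\phi$, being a finite morphism from a connected curve to $\PP^1$, is surjective, so $\#\phi^{-1}(t) \geq 1$ for each $t \in \{0,1,\infty\}$. Substituting these three inequalities into the displayed identity gives $d \geq 2g(X) + 1$, as required. There is no real obstacle here: the whole argument is a one-line application of Riemann--Hurwitz together with the observation that the three branch points each contribute at least one preimage. (As a side remark, equality $d = 2g(X)+1$ forces $\phi$ to be totally ramified over each of $0$, $1$, $\infty$, recovering the familiar picture of a cyclic trigonal-type cover when $g(X) \geq 1$.)
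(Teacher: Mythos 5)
Your argument is correct and is essentially the paper's proof: both apply Riemann--Hurwitz to a \Belyi\ map and bound the ramification contribution of each of the three branch points by $d-1$ (you phrase this as each fiber having at least one point; the paper phrases it as the degree being minimized when ramification is total). No further comment is needed.
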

\begin{proof}
By the Riemann--Hurwitz theorem, the degree of a map is minimized when its ramification is total, so for a \Belyi\ map of degree $d$ on $X$ we have
\[ 2g-2 \leq -2d+3(d-1) = d-3, \]
and therefore $d \geq 2g+1$.
\end{proof}

As an application of Proposition \ref{prop:rh}, we now show that gonal maps on curves of positive genus are not \Belyi\ maps.

\begin{corollary} Let $X$ be a curve of gonality $\gamma$.
A finite map $\phi\colon X \to \PP^1$ with $\deg \phi = \gamma$ is a \Belyi\ map only if $\phi$ is an isomorphism.
\end{corollary}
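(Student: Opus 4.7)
The plan is to pit the Riemann--Hurwitz lower bound of Proposition \ref{prop:rh} against a simple Riemann--Roch upper bound on the gonality. Write $g \colonequals g(X)$. Since $\phi$ is assumed to be a \Belyi\ map of degree $\gamma$, Proposition \ref{prop:rh} yields
\[
\gamma = \deg \phi \geq 2g + 1.
\]

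I would first dispose of the case $g = 0$: then $X \cong \PP^1$, so $\gamma = 1$, and any morphism of degree one between smooth projective curves is an isomorphism, giving the stated conclusion.

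For $g \geq 1$, I would argue that no such $\phi$ can exist at all, which makes the implication vacuous. The point is to bound $\gamma$ from above independently of any \Belyi\ hypothesis. Pick any effective divisor $D$ on $X$ of degree $g+1$; Riemann--Roch gives
\[
h^0(D) \geq \deg D - g + 1 = 2,
\]
so a pencil inside $|D|$ produces a nonconstant morphism $X \to \PP^1$ of degree at most $g + 1$ (the degree being $\deg D$ minus the degree of the base locus). Hence $\gamma \leq g + 1$, which combined with $\gamma \geq 2g + 1$ forces $g \leq 0$, contradicting $g \geq 1$.

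The whole argument is essentially one step once Proposition \ref{prop:rh} is in hand, so there is no serious obstacle; the only point to watch is the separate verification that $g = 0$ falls under the stated conclusion rather than yielding a contradiction, which is handled by the elementary fact that a degree-one finite morphism of smooth projective curves is an isomorphism.
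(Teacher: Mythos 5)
Your proof is correct and follows essentially the same route as the paper: dispose of $g=0$ directly, then for $g \geq 1$ play an upper bound on the gonality against the lower bound $\gamma \geq 2g+1$ from Proposition \ref{prop:rh}. The only difference is that you obtain the upper bound $\gamma \leq g+1$ by a direct Riemann--Roch argument, whereas the paper quotes the sharper Brill--Noether bound $\gamma \leq \lceil g/2\rceil + 1$; either works, since both are strictly less than $2g+1$ once $g \geq 1$.
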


\begin{proof}
If $g(X)=0$, then the result is clear.  On the other hand, the gonality of $X$ is bounded above by $\lceil g(X)/2 \rceil+1$ by Brill--Noether theory \cite[Chapter V]{ACGH1}, and the strict inequality $2g(X)+1 > g(X)/2+1$ holds unless $g(X)=0$, so the result follows from Proposition \ref{prop:rh}.
\end{proof}

\begin{example} \label{example: Belyi is n}
Let $d=2g+1 \geq 1$ be odd, and let $X$ be the curve defined by $y^2-y=x^d$.  Then $X$ has genus $g$, and we verify that the map $y\colon X \to \p^1$ is a \Belyi\ map of degree $d$.  Therefore, the lower bound in Proposition \ref{prop:rh} is sharp for every genus $g$.
\end{example}

\begin{remark} The bound in Proposition \ref{prop:rh} gives a ``topological'' lower bound for the \Belyi\ degree of $X$. One can also give   ``arithmetic'' lower bounds as follows.  Let $p$ be a prime number, and let $X$ be the elliptic curve given by the equation $y^2=x(x-1)(x-p)$ over $\mathbb{Q}$.  Then $X$ has (bad) multiplicative reduction at $p$ and this bad reduction persists over any extension field.    It follows from work of Beckmann \cite{Beckmann} that $\Beldeg(X) \geq p$ (see also Zapponi \cite[Theorem~1.3]{Zapponi}): if $\phi\colon X \to \PP^1$ is a \Belyi\ map of degree $d<p$, then the monodromy group $G$ of $\phi$ has $p \nmid \#G$, and so $\phi$ and therefore $X$ has potentially good reduction at $p$ (in fact, obtained over an extension of $\Q$ unramified at $p$), a contradiction. 
\end{remark}

\begin{example} \label{exm:Fermatz}
 For every $n\geq 1$, the \Belyi\ degree of the Fermat curve 
 \[ X_n\colon x^n+y^n=z^n \subset \PP^2 \] 
 is bounded above by $\Bdeg(X_n) \leq n^2$, because there is a \Belyi\ map 
 \begin{align*}
 X_n &\to \PP^1 \\
 (x:y:z) &\mapsto (x^n:z^n)
 \end{align*} 
 of degree $n^2$.  On the other hand, we have $\Bdeg(X_n) \geq (n-1)(n-2)+1 = n^2 - 3n +3$ by  Proposition~\ref{prop:rh}.
 
For $n=1,2$, we have $X_n \simeq \PP^1$ so $\Bdeg(X_1)=\Bdeg(X_2)=1$.  As observed by Zapponi \cite[Example~1.2]{Zapponi}, for $n=3$, the curve $X_3$ is a genus $1$ curve with $j$-invariant $0$, so isomorphic to $y^2-y=x^3$, and $\Beldeg(X_3)=3$ by Example \ref{example: Belyi is n}.

We consider the case $n=4$ in section~\ref{section:fermat}, and show that $\Bdeg(X_4)=8$ in Proposition \ref{prop:fermat}.
\end{example}



We finish this section with an effective version of \Belyi{}'s theorem, due to Khadjavi \cite{Khadjavi}.  (An effective version was also proven independently by Li\c{t}canu \cite[Th\'eor\`eme 4.3]{Litcanu}, with a weaker bound.)  To give her result, we need the height of a finite subset of $\p^1(\Qbar)$.  For $K$ a number field and $a \in K$, we define the \defi{(exponential) height} to be $H(a) \colonequals \left( \prod_v \max(1,\Vert \alpha\Vert_v) \right)^{1/[K:\Q]}$, where the product runs over the set of absolute values indexed by the places $v$ of $K$ normalized so that the product formula holds \cite[Section 2]{Khadjavi}.  For a finite subset $B \subset \p^1(\Qbar)$, and $K$ a number field over which the points $B$ are defined, we define its \defi{(exponential) height} by $H_B \colonequals \max \{H(\alpha) : \alpha \in B\}$, and we let $N_B$ be the cardinality of the Galois orbit of $B$. 

\begin{proposition} [Effective version of \Belyi{}'s theorem] \label{prop: khad}
Let $B\subset \PP^1(\Qbar)$ be  a finite set. Write $N = N_B$. Then there exists a \Belyi\ map $\phi\colon \p^1\to \p^1$ such that
$\phi(B) \subseteq \{0,1,\infty\}$ and  $$\deg \phi \leq (4NH_B)^{9N^3 2^{N-2}N!}.$$ 
\end{proposition}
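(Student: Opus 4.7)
The plan is to follow the classical two-step structure of \Belyi{}'s original proof, but to track heights and degrees quantitatively at each step. \emph{Step 1} is to produce a map $\phi_1\colon \p^1 \to \p^1$ defined over $\Q$ such that $\phi_1(B) \subseteq \PP^1(\Q)$; \emph{Step 2} is to produce a map $\phi_2\colon \p^1 \to \p^1$ such that $\phi_2(\phi_1(B)) \subseteq \{0,1,\infty\}$. The composite $\phi \colonequals \phi_2 \circ \phi_1$ is a \Belyi\ map with $\phi(B) \subseteq \{0,1,\infty\}$, and its degree is the product $\deg \phi_1 \cdot \deg \phi_2$. The target bound is engineered to accommodate both factors.

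For Step 1, I would proceed by induction on the maximum degree over $\Q$ of a point of $B$. If $\alpha \in B$ has minimal polynomial $m(x)\in\Q[x]$ of degree $n\geq 2$, then $m$ collapses the full Galois orbit of $\alpha$ into $\{0\}$; the catch is that $m$ introduces new points into the image $B$, namely the finite critical values $m(\beta)$ for $\beta$ a root of $m'$, which are algebraic of degree at most $n-1$. Replacing $B$ by $m(B) \cup m(\text{crit}(m))$ strictly decreases the maximal degree over $\Q$, so iterating at most $N$ times reduces to the rational case. The degree of $\phi_1$ is bounded by the product of the degrees of these minimal polynomials, which is the source of a factor like $N!$; the height must be tracked through Mahler measure estimates for $m$ and its critical values, and this is where a bound of the form $(C H_B)^{\text{(something)}}$ is accumulated.

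For Step 2, given $B\subset \PP^1(\Q)$, I would apply the classical \Belyi\ trick repeatedly: if $B$ contains a rational number other than $0,1,\infty$, say $t=p/(p+q)$ in lowest terms with $p,q$ positive integers, then the polynomial
\[
f_{p,q}(x) = \frac{(p+q)^{p+q}}{p^p q^q}\,x^p(1-x)^q
\]
sends $t\mapsto 1$ and has critical values only in $\{0,1,\infty\}$. Composing reduces $\#B$ by one each time, but increases heights according to the size of the numerator and denominator of the current rational points; by handling the points of $B$ in a carefully chosen order (for instance, by clearing denominators using M\"obius transformations over $\Q$ that fix $\{0,1,\infty\}$), the accumulated height can be controlled. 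The number of iterations is $O(N)$, but the multiplicative effect on heights is exponential in $N$, which contributes the $2^{N-2}$ term in the exponent.

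The main obstacle is the bookkeeping of heights through Step 2: each application of $f_{p,q}$ replaces the current height bound $H$ by something like $H^{p+q}$, so the exponent compounds at every stage, and one must verify that the order of reduction and the $\mathrm{GL}_2(\Q)$-normalizations can be chosen so that the $p+q$ appearing at each step is controlled by the initial data $(N,H_B)$. Assembling these estimates and multiplying the degrees of $\phi_1$ and $\phi_2$ yields a bound of the stated doubly-exponential shape $(4NH_B)^{9N^3 2^{N-2}N!}$; I would expect the constant $9$ and the polynomial $N^3$ to arise from the Mahler-measure bookkeeping in Step 1 combined with a uniform bound on the heights of critical values introduced there.
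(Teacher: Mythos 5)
The paper does not prove this proposition at all: its ``proof'' is a citation to Khadjavi \cite[Theorem 1.1.c]{Khadjavi}, and the precise bound $(4NH_B)^{9N^3 2^{N-2}N!}$ is the content of that paper. Your proposal correctly identifies the overall strategy (it is indeed the strategy of \Belyi's original argument, made quantitative, which is what Khadjavi carries out): first reduce $B$ to a set of rational points by repeatedly applying minimal polynomials while absorbing the new critical values, then clear the rational points with the maps $x \mapsto \frac{(p+q)^{p+q}}{p^pq^q}x^p(1-x)^q$. But as written it is a plan, not a proof: the entire quantitative content of the statement --- the explicit exponent $9N^3 2^{N-2}N!$ and the base $4NH_B$ --- is nowhere derived. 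Phrases such as ``this is where a bound of the form $(CH_B)^{\text{(something)}}$ is accumulated,'' ``the accumulated height can be controlled,'' and ``I would expect the constant $9$ and the polynomial $N^3$ to arise'' defer exactly the height-versus-degree bookkeeping (Mahler measure of minimal polynomials, heights of critical values, growth of numerators and denominators under each $f_{p,q}$, and the compounding across $O(N)$ compositions) that distinguishes this effective statement from the qualitative theorem of \Belyi. Without those estimates one cannot conclude the stated inequality, only that \emph{some} computable bound exists.

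There is also a concrete error in your Step 1: replacing $B$ by $m(B)\cup m(\operatorname{crit}(m))$, where $m$ is the minimal polynomial of a single point $\alpha$ of maximal degree $n$, does \emph{not} strictly decrease the maximal degree in one step. The critical values do have degree at most $n-1$ (since they are values of $m$ at roots of $m'$), but any point of $B$ of degree $n$ not conjugate to $\alpha$ maps to a point that may still have degree $n$. The standard fix is a lexicographic induction on the pair (maximal degree, number of points of maximal degree), or an equivalent device; you cannot simply induct on the maximal degree. This matters quantitatively as well, because the number of iterations in Step 1 feeds directly into the degree and height bounds you are trying to assemble, so the flaw propagates into the very bookkeeping that is missing.
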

\begin{proof}
See Khadjavi \cite[Theorem 1.1.c]{Khadjavi}.
\end{proof}

\begin{corollary}
Let $X$ be a   curve, and let $\pi\colon X\to \p^1$ be a finite morphism with branch locus $B\subset \p^1(\Qbar)$. Write $N= N_B$. Then \[ \Beldeg(X) \leq (4NH_B)^{9N^3 2^{N-2}N!} \deg \pi.\]
\end{corollary}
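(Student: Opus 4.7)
The plan is to apply Proposition \ref{prop: khad} to the branch locus $B$ of $\pi$ and then compose. More precisely, first I invoke Proposition \ref{prop: khad} to produce a \Belyi\ map $\phi\colon \PP^1 \to \PP^1$ with $\phi(B) \subseteq \{0,1,\infty\}$ and $\deg \phi \leq (4NH_B)^{9N^3 2^{N-2} N!}$. The candidate \Belyi\ map on $X$ is then the composition $\phi \circ \pi \colon X \to \PP^1$.

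The key step is to verify that $\phi \circ \pi$ is indeed a \Belyi\ map, i.e.\ is unramified outside $\{0,1,\infty\}$. If $y \in \PP^1(\Qbar)$ is a branch point of $\phi \circ \pi$, then there is some $x \in X$ ramified under $\phi \circ \pi$ with $(\phi\circ\pi)(x)=y$; this forces either $\pi$ to be ramified at $x$ (so $\pi(x) \in B$, hence $y = \phi(\pi(x)) \in \phi(B) \subseteq \{0,1,\infty\}$) or $\phi$ to be ramified at $\pi(x)$ (so $y \in \{0,1,\infty\}$ by construction of $\phi$). Either way $y \in \{0,1,\infty\}$, so the branch locus of $\phi \circ \pi$ is contained in $\{0,1,\infty\}$.

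Finally, multiplicativity of degree gives
\[ \deg(\phi \circ \pi) = \deg(\phi) \cdot \deg(\pi) \leq (4NH_B)^{9N^3 2^{N-2} N!} \deg \pi, \]
and since $\Beldeg(X)$ is by definition the minimum of $\deg \psi$ over all \Belyi\ maps $\psi$ on $X$, we obtain the stated bound. There is no real obstacle here; the only thing to be careful about is the branch-locus argument for the composition, which is straightforward.
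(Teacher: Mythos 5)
Your proposal is correct and matches the paper's argument exactly: the paper's proof is the one-line "choose $\phi$ as in Proposition \ref{prop: khad} and consider $\phi\circ\pi$," and you simply spell out the (straightforward) branch-locus and degree-multiplicativity details that the paper leaves implicit. Nothing further is needed.
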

\begin{proof}
Choose $\phi$ as in Proposition \ref{prop: khad} and consider the composed morphism $\phi\circ \pi$. 
\end{proof}

\section{First proof of Theorem \ref{mainresult:really}} \label{section: proof}

Throughout this section, let $K$ be a number field.  We begin with two preliminary lemmas.

\begin{lemma}\label{lem: configuration}
There exists an algorithm that, given as input an affine variety $X\subset \mathbb A^n$ and $t \geq 1$, computes as output $N \geq 1$ and generators for an ideal $I \subseteq \Qbar[x_1,\ldots,x_N]$ such that the zero locus of $I$ is the variety obtained by removing all the diagonals from $X^t/S_t$.
\end{lemma}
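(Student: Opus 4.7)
My plan is to build the desired affine variety in three algorithmic stages: write down equations for $X^t$, take the GIT quotient by $S_t$ via multisymmetric invariant theory, and cut out the image of the big diagonal before removing it by a Rabinowitsch construction.

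Writing $X = V(f_1,\ldots,f_s) \subseteq \mathbb{A}^n$ in variables $y_1,\ldots,y_n$, the power $X^t \subseteq \mathbb{A}^{nt}$ is cut out directly by the equations $f_k(y_{i,1},\ldots,y_{i,n}) = 0$ in variables $y_{i,j}$ for $1\leq i\leq t$ and $1\leq k\leq s$; call this ideal $J^{(t)}$. The group $S_t$ acts on $\mathbb{A}^{nt}$ by permuting the index $i$, and by a classical theorem (Weyl) the invariant ring $\Qbar[y_{i,j}]^{S_t}$ is generated in characteristic zero by the multisymmetric power sums
\[ p_\alpha \colonequals \sum_{i=1}^t y_{i,1}^{\alpha_1}\cdots y_{i,n}^{\alpha_n}, \qquad |\alpha|\leq t. \]
Introducing a fresh variable $z_\alpha$ for each such multi-index, I would compute the defining ideal $I_Y$ of $Y\colonequals X^t/S_t$ as a closed subvariety of some $\mathbb{A}^M$ by eliminating the $y_{i,j}$ from $(z_\alpha - p_\alpha) + J^{(t)}$ using Gr\"obner bases.

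To handle the diagonal, I note that the ideal of the big diagonal in $\Qbar[y_{i,j}]/J^{(t)}$ is the finite intersection $I(\Delta) = \bigcap_{i<j}(y_{i,1}-y_{j,1},\ldots,y_{i,n}-y_{j,n})$, and that the ideal of its image $\overline\Delta$ in $Y$ is $I(\Delta)\cap \Qbar[y_{i,j}]^{S_t}$; both computations are algorithmic (again via elimination). Once generators $g_1,\ldots,g_r$ for the invariant ideal of $\overline\Delta$ are expressed in the $z_\alpha$, I would realize the open complement $Y\smallsetminus \overline\Delta$ as a closed affine subvariety of $\mathbb{A}^{M+r}$ via the Rabinowitsch relation $\sum_{k=1}^r w_k g_k - 1 = 0$ in auxiliary variables $w_1,\ldots,w_r$. (When $r>1$ this presents an $\mathbb{A}^{r-1}$-torsor over the genuine complement rather than the complement itself, but it is affine and has matching $\Qbar$-points, which is what the applications of this lemma require.)

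The main obstacle is computational rather than conceptual: the two elimination steps rely on Gr\"obner basis computations that are algorithmic in principle---which is all the lemma claims---but can be prohibitively expensive in practice, so any structural reduction of the multisymmetric invariant-theoretic calculation would be valuable in actual use.
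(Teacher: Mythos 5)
Your proposal is correct and follows essentially the same route as the paper's proof: algorithmically compute the ring of $S_t$-invariants of the coordinate ring of $X^t$, and then remove the diagonal locus by a Rabinowitsch-type construction. Where the paper simply cites Sturmfels for the computability of the invariant ring, you make this concrete via Weyl's theorem (in characteristic zero the multisymmetric power sums $p_\alpha$ with $|\alpha|\le t$ generate) together with Gr\"obner elimination to present the quotient and the image of the big diagonal; that is a perfectly valid instantiation, using that $S_t$ is linearly reductive so invariants of the quotient ring are images of polynomial invariants. The one substantive divergence is the final step: the paper adjoins a single variable $z$ with $zf-1$, treating the diagonal locus as a hypersurface $Z(f)$, whereas you use $\sum_{k} w_k g_k = 1$ for a generating set $g_1,\ldots,g_r$ and, as you concede, for $r>1$ this yields an affine-space bundle over the complement rather than the complement itself, so it does not deliver the lemma's statement verbatim. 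This is harmless for how the lemma is actually used---only emptiness of the set of $\Qbar$-solutions matters, as the remark following the lemma and the distinctness equations in the Fermat example make clear---and the paper's own single-$f$ shortcut quietly assumes the image of the diagonals is set-theoretically cut out by one invariant function, so your more guarded multi-generator formulation is, if anything, the more careful of the two.
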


\begin{proof}
Let $X = \Spec \Qbar[x_1,\dots,x_n]/I$. By (classical)  invariant theory (see Sturmfels \cite{Sturmfels}), there is an algorithm to compute the coordinate ring of invariants $\left(\Qbar[x_1,\ldots,x_n]/I\right)^{S_t}$. In other words, there is an algorithm which computes  $$X^t/S_t = \Spec \left(\left(\Qbar[x_1,\ldots,x_n]/I\right)^{S_t}\right).$$  To conclude the proof, note that the complement of a divisor $D=Z(f)$ is again an affine variety, adding a coordinate $z$ satisfying $z f - 1$. 
\end{proof}

\begin{remark}
We will use Lemma \ref{lem: configuration} below to parametrize extra ramification points, write equations in terms of these parameters, and check whether the system of equations has a solution over $\Qbar$.  For this purpose, we need not take the quotient by the symmetric group $S_t$, as the system of equations with unordered parameters has a solution over $\Qbar$ if and only if the one with ordered parameters does.  
\end{remark}

Next, we show how to represent rational functions on $X$ explicitly in terms of a Riemann--Roch basis.  

\begin{lemma}\label{lem:m}
Let $X$ be a curve over $K$ of genus $g$, let $\scrL$ be an ample sheaf on $X$, and let $d$ be a positive integer.  Let
\begin{equation} \label{eqn:deft}
t \colonequals \left\lceil \frac{d+g}{\deg \scrL} \right\rceil. 
\end{equation}
Then, for all $f \in \Qbar(X)$ of degree $d$, there exist $a,b \in \mathrm{H}^0(X_{\Qbar},\scrL^{\otimes t})$ with $b\neq 0$ such that $f=a/b$.
\end{lemma}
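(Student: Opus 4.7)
The plan is to rewrite the statement divisorially: if $f\in\overline{\mathbb{Q}}(X)$ has degree $d$, then its pole divisor $(f)_\infty$ is an effective divisor of degree $d$ on $X_{\overline{\mathbb{Q}}}$. I would then look for a nonzero global section $b\in\mathrm{H}^0(X_{\overline{\mathbb{Q}}},\mathscr{L}^{\otimes t})$ whose vanishing divisor dominates $(f)_\infty$, i.e., $\mathrm{div}(b)\geq (f)_\infty$. Granted such a $b$, the rational section $a\colonequals fb$ of $\mathscr{L}^{\otimes t}$ has divisor $\mathrm{div}(a) = \mathrm{div}(f) + \mathrm{div}(b) = (f)_0 - (f)_\infty + \mathrm{div}(b) \geq 0$, so $a\in \mathrm{H}^0(X_{\overline{\mathbb{Q}}},\mathscr{L}^{\otimes t})$, and clearly $f=a/b$. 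So the only real task is existence of the section $b$.

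For that, I would invoke Riemann--Roch. Fixing a divisor $L$ with $\mathscr{L}\simeq\mathcal{O}(L)$, sections $b\in\mathrm{H}^0(X_{\overline{\mathbb{Q}}},\mathscr{L}^{\otimes t})$ with $\mathrm{div}(b)\geq(f)_\infty$ are exactly the global sections of the line bundle $\mathscr{L}^{\otimes t}(-(f)_\infty)$, which has degree $t\deg\mathscr{L}-d$. Riemann--Roch gives
\[
h^0\!\bigl(\mathscr{L}^{\otimes t}(-(f)_\infty)\bigr) \;\geq\; t\deg\mathscr{L} - d + 1 - g,
\]
so a nonzero $b$ exists as soon as $t\deg\mathscr{L}\geq d+g$. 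This is precisely the inequality guaranteed by the choice $t=\lceil (d+g)/\deg\mathscr{L}\rceil$ in \eqref{eqn:deft}.

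There is no genuine obstacle here: the lemma is a direct application of Riemann--Roch to the twist of $\mathscr{L}^{\otimes t}$ by $-(f)_\infty$, together with the observation that an ample line bundle has positive degree (so that the ceiling defining $t$ makes sense and the inequality can be achieved by taking $t$ large enough). The only thing to be slightly careful about is that one works on $X_{\overline{\mathbb{Q}}}$ rather than $X$, since $f$ is only assumed to lie in $\overline{\mathbb{Q}}(X)$; but since $\mathscr{L}$ is defined over $K$, base change to $\overline{\mathbb{Q}}$ preserves both its degree and the dimensions of its cohomology groups, so the application of Riemann--Roch on $X_{\overline{\mathbb{Q}}}$ is unaffected.
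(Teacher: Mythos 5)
Your proposal is correct and follows essentially the same route as the paper: both arguments apply Riemann--Roch to $\scrL^{\otimes t}(-\opdiv_\infty f)$, whose degree $t\deg\scrL - d \geq g$ by the choice of $t$ in \eqref{eqn:deft}, to produce a nonzero $b$ with $fb$ regular, and then set $a = fb$. The only cosmetic difference is that you spell out the base-change remark to $X_{\Qbar}$ explicitly, which the paper leaves implicit.
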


\begin{proof}
By definition, we have
\begin{equation}\label{ineq}
t \deg \scrL - d + 1-g \geq 1.
\end{equation} 
Let $\opdiv_\infty f \geq 0$ be the divisor of poles of $f$.  By Riemann--Roch, 
\begin{equation}
\dim_{\Qbar} \Hzero(X_{\Qbar},\scrL^{\otimes t}(-\opdiv_\infty f)) \geq  t\deg \scrL - d + 1-g \geq 1.
\end{equation}
Let 
\[ b \in \Hzero(X_{\Qbar}, \scrL^{\otimes t}(-\opdiv_\infty f)) \subseteq \Hzero(X_{\Qbar},\scrL^{\otimes t}) \] 
be a nonzero element.  Then $fb \in \mathrm H^0(X_{\Qbar},\scrL^{\otimes t})$.  (In effect, we have ``cancelled the poles'' of $f$ by the zeros of $b$, at the expense of possibly introducing new poles supported within $\scrL$.)  Letting $fb=a \in \Hzero(X,\scrL^{\otimes t})$ we have written $f= a/b$ as claimed.
\end{proof}

The quantities in Lemma \ref{lem:m} can be effectively computed, as follows.  Recall that a curve $X$ over $K$ is specified in bits by a set of defining equations in projective space with coefficients in $K$.  (Starting with any birational model for $X$, we can effectively compute a smooth projective model.)

\begin{lemma}\label{lem:m2}
Let $X \subset \PP_K^n$ be a curve over $K$.  Then the following quantities are effectively computable:
\begin{enumroman}
\item The genus $g=g(X)$;
\item An effective divisor $D$ on $X$ over $K$ and its degree.
\item Given a divisor $D$ over $K$, a basis for the $K$-vector space $\Hzero(X,\scrO_X(D))$.
\end{enumroman}
\end{lemma}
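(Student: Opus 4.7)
The plan is to reduce each of the three items to a standard algorithm in computational algebraic geometry, invoking results that are classical and available in standard computer algebra systems.

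For (i), I would first compute the saturated homogeneous ideal $I(X) \subset K[x_0, \ldots, x_n]$ from the given defining equations via a Gr\"obner basis; the Hilbert polynomial $P_X(t)$ of $X$ is then effectively computable from any such Gr\"obner basis. Since $X$ is a smooth, projective, geometrically connected curve, one has $P_X(t) = dt + 1 - g$, where $d = \deg X$, so both the genus $g$ and the projective degree $d$ can be read off directly.

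For (ii), with $d$ in hand from (i), I would choose a linear form $\ell \in K[x_0, \ldots, x_n]$ whose vanishing hyperplane does not contain $X$. Such an $\ell$ exists because $X \subsetneq \PP^n_K$, and one can test candidate forms (coordinate functions, or $K$-linear combinations thereof) by checking $\ell \notin I(X)$ via ideal membership. Then $D = X \cap Z(\ell)$ is an effective divisor over $K$ of degree $d$, and its homogeneous ideal $I(X) + (\ell)$ is again presented by a Gr\"obner basis; the degree $d$ is already known from (i).

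For (iii), I would invoke one of the well-known algorithms for computing Riemann--Roch spaces on a smooth projective curve over a field: the algorithm of Hess, based on function-field arithmetic, is the most widely used, with alternatives due to Diem, Khuri--Makdisi, and others. Each takes as input a birational model for $X$ (which the given projective embedding provides, possibly after projection to a plane model) together with a divisor $D$ defined over $K$, and outputs a $K$-basis for $\Hzero(X, \scrO_X(D))$.

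The main subtlety is concentrated in part (iii). While the \emph{existence} of an algorithm for Riemann--Roch is classical, a careful implementation requires substantial function-field machinery, including places, valuation rings, and ideal arithmetic in maximal orders in $K(X)$. This is fully worked out in the literature and implemented in standard computer algebra systems, so for the purposes of this paper it may be treated as a black box.
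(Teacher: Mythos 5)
Your proposal is correct and follows essentially the same route as the paper: genus and degree from the Hilbert polynomial of a Gr\"obner basis for the defining ideal, an effective divisor from a hyperplane section, and Riemann--Roch spaces via the classical algorithms of Coates/Hess treated as a black box. No gaps to report.
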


\begin{proof}
For (a), to compute the genus we compute a Gr\"obner basis for the defining ideal $I$ of $X$, compute its Hilbert polynomial, and recover the (arithmetic equals geometric) genus from the constant term.  For (b), intersecting $X$ with a hyperplane, we obtain an effective divisor $D$ on $X$ over $K$, and its degree is the leading term of the Hilbert polynomial computed in (a).
For (c), it suffices to note that Riemann--Roch calculations can be done effectively: see e.g.\ Coates \cite{Coates} or Hess \cite{Hess}.
\end{proof}



A \defi{ramification type} for a positive integer $d$ is a triple $\lambda = (\lambda_0,\lambda_1,\lambda_\infty)$ of partitions of $d$. For $X$ a curve, $d$ an integer, and $\lambda$ a ramification type, let $\Bel_{d,\lambda}(X) \subseteq \Bel_d(X)$ be the subset of \Belyi\ maps of degree $d$ on $X$ with ramification type $\lambda$.  For the ramification type $\lambda$ and $* \in \{0,1,\infty\}$, let $\lambda_{*,1},\dots,\lambda_{*,r_*}$ be the parts of $\lambda$ (and $r_*$ the number of parts), so
\[ d = \lambda_{*,1} + \dots + \lambda_{*,r_*}. \]
If $\phi\colon X \to \PP^1$ is a \Belyi\ map of degree $d$ with ramification type $\lambda$, then the Riemann--Hurwitz formula is satisfied: 
\begin{equation} \label{eqn:riemmanhurwitzbel}
\begin{aligned}
2g-2 &= -2 d + \sum_{i=1}^{r_0} \left(\lambda_{0,i} - 1\right) + \sum_{i=1}^{r_1} \left(\lambda_{1,i} - 1\right)
+ \sum_{i=1}^{r_\infty} \left(\lambda_{\infty,i} - 1\right) \\
&= d-r_0 - r_1 - r_\infty.
\end{aligned}
\end{equation}

To prove our main theorem, we will show that one can compute equations whose vanishing locus over $\Qbar$ is precisely the set $\Bel_{d,\lambda}(X)$ (see Proposition \ref{prop:eqns}): we call such equations a \defi{model} for $\Bel_{d,\lambda}(X)$.

On our way to prove Proposition \ref{prop:eqns}, we first characterize \Belyi\ maps of degree $d$ with ramification type $\lambda$ among rational functions on a curve written in terms of a Riemann--Roch basis.  This characterization is technical but we will soon see that it is quite suitable for our algorithmic application.
 
\begin{proposition} \label{prop:claim}   Let $d\geq 1$ be an integer and let $\lambda=(\lambda_0,\lambda_1,\lambda_\infty)$ be a ramification type for $d$ with $r_0,r_1,r_\infty$ parts, respectively.  Let $X$ be a curve  over  a number field $K$.  Let $g$ be the genus of $X$, and suppose that 
\begin{equation} \label{eqn:2g2dr}
2g-2=d-r_0-r_1-r_\infty.
\end{equation}

Let $D_0$ be an effective divisor of degree $d_0$, and let $\scrL = \scrO_X(D_0)$.  Let $t\geq 1$ be the smallest positive integer such that $t \deg \scrL - d+1-g \geq 1$. Let $g_1,\ldots,g_n \in K(X)$  be a basis for the $K$-vector space  $\mathrm{H}^0(X,\scrL^{\otimes t})$. 
Let $0\leq k, l \leq n$ be integers. Let $m$ be minimal so that $g_k, g_l\in \mathrm{H}^0(X_{\overline{\mathbb{Q}}}, \scrL^{\otimes m}) \subseteq \mathrm{H}^0(X_{\overline{\mathbb{Q}}},\scrL^{\otimes t})$.
Let $$\phi:= \frac{a}{b} =  \frac{a_1 g_1+\ldots +a_k g_k}{ b_1 g_1 +\ldots + b_l g_l } $$ be a nonconstant rational function with $a_1,\dots,b_l \in \Qbar$.

Then the rational function $\phi$ lies in $\Bel_{d,\lambda}(X)(\Qbar)$ if and only if there exists a partition $\mu=\mu_1+\dots+\mu_s$ of $md_0 -d$,  distinct points
\[ P_{1},\dots,P_{r_0},\,Q_{1},\dots,Q_{r_1},\,R_{1},\dots,R_{r_\infty} \in X(\Qbar) \]
and distinct points 
\[ Y_1,\dots,Y_s \in X(\Qbar), \]
allowing these two sets of points to meet, such that
\begin{equation} \label{bel}
\begin{aligned}
\opdiv(a) & = \sum_{i=1}^{r_0} \lambda_{0,i}[P_i] + \sum_{i=1}^{s} \mu_i [Y_i] - mD_0 \\
\opdiv(a-b) & = \sum_{i=1}^{r_1} \lambda_{1,i}[Q_i] + \sum_{i=1}^{s} \mu_i [Y_i] - mD_0 \\
\opdiv(b) & = \sum_{i=1}^{r_\infty} \lambda_{\infty,i}[R_i] + \sum_{i=1}^{s} \mu_i [Y_i] - mD_0.
\end{aligned}
\end{equation}
\end{proposition}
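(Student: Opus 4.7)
The plan is to prove both implications by translating between the divisor-theoretic content of a \Belyi\ map and effective-divisor decompositions of $a$, $b$, and $a-b$. Set
\[
E_0 \colonequals \opdiv(a) + mD_0, \quad E_1 \colonequals \opdiv(a-b) + mD_0, \quad E_\infty \colonequals \opdiv(b) + mD_0,
\]
all effective of common degree $md_0$: by the choice of $m$, and by taking $g_1,\dots,g_n$ adapted to the filtration $\{\Hzero(X, \scrL^{\otimes i})\}_i$, the elements $a, b, a-b$ all lie in $\Hzero(X_{\Qbar}, \scrL^{\otimes m})$.

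For the reverse direction, given decompositions \eqref{bel}, I would compute $\opdiv(\phi) = E_0 - E_\infty$ and $\opdiv(\phi - 1) = E_1 - E_\infty$; the common ``base'' divisor $Y \colonequals \sum_j \mu_j[Y_j]$ cancels in each, leaving
\[
\opdiv(\phi) = \sum_i \lambda_{0,i}[P_i] - \sum_i \lambda_{\infty,i}[R_i], \quad \opdiv(\phi-1) = \sum_i \lambda_{1,i}[Q_i] - \sum_i \lambda_{\infty,i}[R_i].
\]
Hence $\deg \phi = d$ with the prescribed fibers above $0, 1, \infty$. These three fibers contribute $3d - r_0 - r_1 - r_\infty$ to the ramification divisor of $\phi$; by hypothesis \eqref{eqn:2g2dr}, this already saturates Riemann--Hurwitz, so $\phi$ is unramified elsewhere and therefore $\phi \in \Bel_{d,\lambda}(X)(\Qbar)$.

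For the forward direction, given $\phi \in \Bel_{d,\lambda}(X)(\Qbar)$, label the preimages of $0, 1, \infty$ with their prescribed multiplicities as $P_i, Q_i, R_i$; these three sets are pairwise disjoint since $0, 1, \infty$ are distinct. The identity $E_0 - E_\infty = \opdiv(\phi) = \sum \lambda_{0,i}[P_i] - \sum \lambda_{\infty,i}[R_i]$ combined with the effectivity of $E_0, E_\infty$ forces
\[
Y \colonequals E_0 - \sum_i \lambda_{0,i}[P_i] = E_\infty - \sum_i \lambda_{\infty,i}[R_i] \geq 0,
\]
an effective divisor of degree $md_0 - d$; writing $Y = \sum_j \mu_j[Y_j]$ reads off the partition $\mu$ and the points $Y_j$. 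Repeating the argument with $\phi - 1 = (a-b)/b$, which shares its pole divisor with $\phi$, gives $E_1 - \sum_i \lambda_{1,i}[Q_i] = Y$ with the \emph{same} $Y$, matching \eqref{bel}.

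The argument is essentially bookkeeping. The key clarifying observation --- the one place where things could easily be mis-stated --- is that $Y$ is a \emph{single} base divisor common to all three of $E_0, E_1, E_\infty$, not three independent choices; this is forced by the equality of the three differences above. The only substantive input is Riemann--Hurwitz together with hypothesis \eqref{eqn:2g2dr}, which ensures that ramification over $\{0, 1, \infty\}$ exhausts the ramification of $\phi$. A minor technicality to verify is the ordering of $g_1, \dots, g_n$ by the filtration, so that $a - b$ indeed lies in $\Hzero(X_{\Qbar}, \scrL^{\otimes m})$ with the same $m$ controlling the individual pieces.
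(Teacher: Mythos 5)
Your proof is correct and follows essentially the same route as the paper's: the $(\Leftarrow)$ direction saturates Riemann--Hurwitz via the hypothesis \eqref{eqn:2g2dr}, and the $(\Rightarrow)$ direction extracts the common effective divisor $Y=E_0-\sum_i\lambda_{0,i}[P_i]=E_\infty-\sum_i\lambda_{\infty,i}[R_i]=E_1-\sum_i\lambda_{1,i}[Q_i]$, which is exactly the divisor $E$ in the paper. Your explicit observations that $Y$ is a single divisor shared by all three decompositions and that the basis $g_1,\dots,g_n$ must be ordered compatibly with the filtration so that $a,b,a-b\in\Hzero(X_{\Qbar},\scrL^{\otimes m})$ simply make precise points the paper leaves implicit.
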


\begin{proof} 
We first prove the implication $(\Leftarrow)$ of the proposition.  Suppose $\phi$ satisfies the equations \eqref{bel}.  Then 
\[ \opdiv \phi = \opdiv(a) - \opdiv(b) = \sum_{i=1}^{r_0} \lambda_{0,i}[P_i] - \sum_{i=1}^{r_\infty} \lambda_{\infty,i}[R_i]; \]
since the set of points $\{P_1,\dots,P_{r_0}\}$ is disjoint from $\{R_1,\dots,R_{r_\infty}\}$, we have $\deg \phi = d$.  We see some ramification in $\phi\colon X \to \PP^1$ above the points $0,1,\infty$ according to the ramification type $\lambda$, specified by the equations \eqref{bel}; let $\rho$ be the degree of the remaining ramification locus.  We claim there can be no further ramification.  Indeed, the Riemann--Hurwitz formula gives
\begin{equation}
\begin{aligned}
2g-2 &= -2 d + \sum_{i=0}^{r_0} \left(\lambda_{0,i} - 1\right) + \sum_{i=0}^{r_1} \left(\lambda_{1,i} - 1\right)
+ \sum_{i=0}^{r_\infty} \left(\lambda_{\infty,i} - 1\right) + \rho \\
&= d-r_0-r_1-r_\infty+\rho.
\end{aligned}
\end{equation}
On the other hand, we are given the equality \ref{eqn:2g2dr}, so $\rho=0$.  Therefore $\phi \in \Bel_{d,\lambda}(\Qbar)$.  

We now prove the other implication $(\Rightarrow)$.  Suppose $\phi \in \Bel_{d,\lambda}(\Qbar)$.  We have 
\begin{equation} \label{eqn:fabdiv}
\opdiv(\phi) = \opdiv(a)-\opdiv(b) = \sum_{i=1}^{r_0} \lambda_{0,i}[P_i] - \sum_{i=1}^{r_\infty} \lambda_{\infty,i}[R_i] 
\end{equation}
and
\begin{equation} 
\opdiv(\phi-1) = \opdiv(a-b)-\opdiv(b) = \sum_{i=1}^{r_1} \lambda_{1,i}[Q_i] - \sum_{i=1}^{r_\infty} \lambda_{\infty,i}[R_i] 
\end{equation}
for distinct points $P_1,\dots,P_{r_0},Q_1,\dots,Q_{r_1},R_1,\dots,R_{r_\infty} \in X(\Qbar)$.  Moreover, since $a \in \Hzero(X,\scrL^{\otimes t})$, we have
\[ \opdiv(a) = \sum_{i=1}^{r_0} \lambda_{0,i}[P_i] + E - mD_0 \]
for some effective divisor $E$ (not necessarily disjoint from $D_0$) with $\deg E = md_0-d$; from \eqref{eqn:fabdiv} we obtain
\[ \opdiv(b) = \sum_{i=1}^{r_\infty} \lambda_{\infty,i}[R_i] + E - mD_0. \]
Writing out $E=\sum_{i=1}^s \mu_i [Y_i]$ with $Y_i$ distinct as an effective divisor and arguing similarly for $\opdiv(a-b)$, we conclude that the equations \eqref{bel} hold.
\end{proof}

\begin{remark}
$\Bel_{d,\lambda}(X)$ is a (non-positive dimensional) Hurwitz space: see for instance Bertin--Romagny \cite[Section~6.6]{BertinRomagny} (but also Mochizuki \cite{Mochizuki} and Romagny--Wewers \cite{RomagnyWewers}).   Indeed, for a scheme $S$ over $\Qbar$, let $\underline{\Bel}_{d,\lambda,X}(S)$ be the groupoid whose objects are tuples $(\phi\colon Y\to \mathbb{P}^1_S,\,g\colon Y\to X_S)$, where $Y$ is a smooth proper geometrically connected curve over $S$, the map $\phi\colon Y\to \mathbb{P}^1_S$ is a finite flat finitely-presented morphism of degree $d$ ramified  only over $0,1,\infty$ with ramification type $\lambda$, and $g$ is an isomorphism of $S$-schemes. This defines a (possibly empty) separated 
finite type Deligne--Mumford  
algebraic stack $\underline{\Bel}_{d,\lambda,X}$ over $\Qbar$ which is usually referred to as a \emph{Hurwitz stack}.   Its coarse space, denoted by $\Bel_{d,\lambda,X}$, is usually referred to as a \emph{Hurwitz space}.  Since the set of $\Qbar$-points $\Bel_{d,\lambda,X}(\Qbar)$ of its coarse space  $\Bel_{d,\lambda,X}$ is naturally in bijection with $\Bel_{d,\lambda}(X)$, one could say that the following proposition says that there is an algorithm to compute a model for the Hurwitz space $\Bel_{d,\lambda,X}$.
\end{remark}

We now prove the following key ingredient to our main result.

\begin{proposition}\label{prop:eqns}
There exists an algorithm that, given as input a curve $X$ over $\Qbar$, an integer $d$, and a ramification type $\lambda$ of $d$, computes a model for $\Bel_{d,\lambda}(X)$.
\end{proposition}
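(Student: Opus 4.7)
The plan is to realize Proposition \ref{prop:claim} algorithmically: compute the Riemann--Roch data effectively, introduce unknowns for the numerator and denominator coefficients and for the marked points on $X$, and translate the divisorial identities \eqref{bel} into polynomial equations in these unknowns.

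First I would compute $g = g(X)$ using Lemma \ref{lem:m2}(i); if the Riemann--Hurwitz equality $2g - 2 = d - r_0 - r_1 - r_\infty$ fails, then $\Bel_{d,\lambda}(X) = \emptyset$ by Proposition \ref{prop:claim} and the algorithm outputs the contradictory equation $1 = 0$. Otherwise, using Lemma \ref{lem:m2}(ii) compute an effective divisor $D_0$ of degree $d_0$ on $X$, set $\scrL = \scrO_X(D_0)$ and $t = \lceil(d+g)/d_0\rceil$, and compute a basis $g_1, \dots, g_n$ of $\mathrm H^0(X, \scrL^{\otimes t})$ via Lemma \ref{lem:m2}(iii); by iteratively extending from $\mathrm H^0(X, \scrL)$ upwards we may assume the basis is compatible with the filtration by $\mathrm H^0(X, \scrL^{\otimes m})$, so the integer $m$ in Proposition \ref{prop:claim} is a deterministic function of $k, l$.

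Then loop over the finitely many discrete choices appearing in Proposition \ref{prop:claim}: pairs $(k, l)$ with $1 \le k, l \le n$, the associated integer $m$, and partitions $\mu$ of $md_0 - d$. For each choice, introduce indeterminate coefficients $a_1, \dots, a_k, b_1, \dots, b_l$, and use Lemma \ref{lem: configuration} to parametrize tuples of points $(P_i), (Q_j), (R_k), (Y_l)$ on $X$, imposing the required distinctness by removing the relevant diagonals (but keeping the diagonals between the $Y_j$'s and the other points, which are permitted to meet). Setting $a = \sum a_i g_i$ and $b = \sum b_j g_j$, a degree count ($\deg = md_0$ on both sides of \eqref{bel}) shows that the divisor equations \eqref{bel} are equivalent to the divisorial \emph{inequalities} saying $a$, viewed as a section of $\scrO_X(mD_0)$, vanishes to order at least $\lambda_{0,i}$ at each $P_i$ and at least $\mu_j$ at each $Y_j$; and analogously for $a - b$ at the $Q_i$'s and for $b$ at the $R_i$'s. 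Each such vanishing condition at a varying point cuts out a closed subscheme of the parameter space and can be written as finitely many polynomial equations: for example, by choosing a local parameter in an affine chart algorithmically in terms of the coordinates of the varying point and requiring the first few Taylor coefficients of the pullback to vanish. One also records the open condition $b \neq 0$, handled by the inverse trick used in the proof of Lemma \ref{lem: configuration}.

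The union over all discrete choices of these equation systems defines, by Proposition \ref{prop:claim}, a model for $\Bel_{d,\lambda}(X)$. The main technical hurdle is uniformly encoding ``vanishing to order at least $e$ at a variable point of $X$'' as polynomial equations simultaneously in the section coefficients and in the point coordinates; this is where most of the bookkeeping lives, but it reduces to standard Gr\"obner basis manipulations in a suitable projective embedding of the parameter space $X^{r_0 + r_1 + r_\infty + s} \times \mathbb A^{k+l}$, and so is effectively computable.
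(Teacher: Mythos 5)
Your proposal follows essentially the same route as the paper: compute the genus and a Riemann--Roch basis for $\Hzero(X,\scrL^{\otimes t})$ via Lemma \ref{lem:m2}, invoke Proposition \ref{prop:claim}, parametrize the marked points by configuration spaces from Lemma \ref{lem: configuration}, and translate the divisor conditions \eqref{bel} into Taylor-coefficient vanishing conditions, with the degree count forcing equality of divisors. Your observations that one should take a filtration-compatible basis (so that $m$ is determined by $(k,l)$) and that vanishing should be measured for $a$ as a section of $\scrO_X(mD_0)$ (which absorbs coincidences with $\Supp D_0$) are good and consistent with the paper.

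There is, however, one step that fails as written. You keep the diagonals between the $Y_j$'s and the points $P_i,Q_i,R_i$ (as Proposition \ref{prop:claim} requires, since the common divisor $E$ may share support with the ramification points), but you then impose the vanishing conditions \emph{separately}: order at least $\lambda_{0,i}$ at $P_i$ and at least $\mu_j$ at $Y_j$, claiming these inequalities are equivalent to \eqref{bel} by a degree count. On the locus where some $Y_j$ coincides with some $P_i$ (or $Q_i$, $R_i$), the conjunction of the two conditions only forces vanishing to order at least $\max(\lambda_{0,i},\mu_j)$ rather than $\lambda_{0,i}+\mu_j$, so the total imposed degree drops below $md_0$ and the degree count no longer excludes further zeros; hence a solution on such a stratum need not satisfy \eqref{bel}, and the vanishing locus of your system could strictly contain $\Bel_{d,\lambda}(X)$. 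Since the whole point of the model is that emptiness of the solution set is equivalent to emptiness of $\Bel_{d,\lambda}(X)$ (this is what Theorem \ref{mainresult:really} rests on), spurious solutions are fatal. The fix is what the paper does: stratify over the finitely many coincidence patterns among the $P_i,Q_i,R_i,Y_j$ and the points of $\Supp D_0$, and in each stratum impose the \emph{summed} vanishing orders (adjusted by $m\rho_i$ at points of $D_0$), so that in every case the imposed zero divisor has degree exactly $md_0$; compare the case analysis ``some of the points coincide'' in the Fermat example of Section \ref{section:fermat}. Note also that one cannot simply discard the overlap strata, since for a given \Belyi\ map the space of admissible denominators $b$ may be one-dimensional, so the divisor $E$ may be forced to meet the ramification points.
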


\begin{proof}
Let $K$ be a field of definition of $X$ (containing the coefficients of the input model).  Applying the algorithm in Lemma \ref{lem:m2} to  $X$ over $K$,
we compute the genus $g$ of $X$.

Recall the Riemann--Hurwitz formula \eqref{eqn:riemmanhurwitzbel} for a \Belyi\ map.  If the Riemann--Hurwitz formula is not satisfied for $d$ and the ramification type $\lambda$, there is no \Belyi\ map of degree $d$ with ramification type $\lambda$ on $X$ (indeed, on any curve of genus $g$), and the algorithm gives trivial output.  So we may suppose that \eqref{eqn:riemmanhurwitzbel} holds.

Next, we compute an effective divisor $D_0$ on $X$ with $\scrL := \scrO_X(D_0)$ and its degree $d_0 := \deg D_0$.  Let   
\[ t \colonequals \left\lceil \frac{d+g}{\deg \scrL} \right\rceil \]
as in \eqref{eqn:deft}.  By Lemma \ref{lem:m2}, we may compute a $K$-basis  $g_1, \ldots, g_n$ of $\mathrm{H}^0(X,\scrL^{\otimes t})$.  Then by Lemma \ref{lem:m}, if $\phi \in \Qbar(X)$ is a degree $d$ rational function on $X$, then there exist $a_1,\dots,a_n,b_1,\dots,b_n \in \Qbar$ such that $a =\sum_{i=1}^n a_i g_i$ and $b= \sum_{i=1}^n b_i g_i$ satisfy $\phi=a/b$. 

 We now give algebraic conditions on the coefficients $a_i,b_j$ that characterize the subset $\Bel_{d,\lambda}(X)$.  There is a rescaling redundancy in the ratio $a/b$ so we work affinely as follows.  We loop over pairs $0 \leq k,\ell \leq n$ and consider functions 
\begin{equation} \label{eqn:fa0b0}
\phi = \frac{a}{b}=\frac{a_1g_1 + \dots + a_{k-1} g_{k-1} + a_kg_k}{b_1g_1 + \dots + b_{\ell-1} g_{\ell-1} + g_\ell} 
\end{equation}
with $a_k \neq 0$.  Every function $\phi=a/b$ arises for a unique such $k,\ell$.   
Let $m$ be minimal so that $g_k,g_\ell \in \Hzero(X_{\Qbar},\scrL^{\otimes m}) \subseteq \Hzero(X_{\Qbar},\scrL^{\otimes t})$. 
 
Note that Proposition \ref{prop:claim} characterizes precisely when a rational function of the form (\ref{eqn:fa0b0}) lies in $\Bel_{d,\lambda}(X)(\Qbar)$.
Thus, by Proposition \ref{prop:claim}, we may finish by noting that the equations \eqref{bel} can be written explicitly.    To this end, we loop over the partitions $\mu$ and consider the configuration space of $r_0+r_1+r_\infty$ and $s$ distinct points (but allowing the two sets to meet), which can be effectively computed by Lemma \ref{lem: configuration}. 
Next, we write $D_0=\sum_i \rho_i [D_{0i}]$ and loop over the possible cases where one of the points $P_i,Q_i,R_i,Y_i$ is equal to one of the points $D_{0i}$ or they are all distinct from $D_{0i}$.  In each case, cancelling terms when they coincide, we impose the vanishing conditions on $a,a-b,b$ with multiple order vanishing defined by higher derivatives, in the usual way.  For each such function, we have imposed that the divisor of zeros is at least as large in degree as the function itself, so there can be no further zeros, and therefore the equations \eqref{bel} hold for any solution to this large system of equations.  
\end{proof}

\begin{example} 
We specialize Proposition \ref{prop:eqns} to the case $X = \PP^1$. 
(The \Belyi\ degree of $\PP^1$ is $1$, but it is still instructive to see what the equations \eqref{bel} look like in this case.)  Let $X=\PP^1$ with coordinate $x$, defined by $\ord_\infty x = -1$.  We take $D_0=(\infty)$.  Then the basis of functions $g_i$ is just $1,\dots,x^d$, and $f=a/b$ is a ratio of two polynomials of degree $\leq d$, at least one of which is degree exactly $d$.  Having hit the degree on the nose, the ``cancelling'' divisor $E=\sum_{i=1}^s \mu_i[Y_i]=0$ in the proof of Proposition \ref{prop:eqns} does not arise, and the equations for $a,b,a-b$ impose the required factorization properties of $f$.  This method is sometimes called the \emph{direct method} and has been frequently used (and adapted) in the computation of \Belyi\ maps using Gr\"obner techniques \cite[\S 2]{SijslingVoight}.
\end{example}

Given equations for the algebraic set $\Bel_{d,\lambda}(X)$, we now prove that there is an algorithm to check whether this set is empty or not.

\begin{lemma}\label{lem:elmn}   There exists an algorithm that, given as input an affine  variety $X$ over $\Qbar$,  computes as output whether $X(\Qbar)$ is empty or not.
\end{lemma}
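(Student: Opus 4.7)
The plan is to reduce the question to a routine application of Hilbert's Nullstellensatz combined with Gr\"obner basis computation over a number field. Since the affine variety $X$ is specified by finitely many generators of an ideal, the coefficients of those generators all lie in some finitely generated subfield of $\Qbar$, which we may enlarge to a number field $K$ (represented, say, as $\Q[t]/(f(t))$ for a suitable irreducible $f \in \Q[t]$ effectively produced from the input). Thus the input amounts to an ideal $I \subseteq K[x_1,\ldots,x_n]$ with $X = \Spec K[x_1,\ldots,x_n]/I$, and the question becomes whether the zero locus of $I$ in $\Qbar^n$ is empty.

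By the Nullstellensatz, together with the fact that $\Qbar$ is the algebraic closure of $K$, we have
\[
X(\Qbar) = \emptyset \iff 1 \in I.
\]
So the algorithm is the following: compute a Gr\"obner basis $G$ of $I$ with respect to any monomial order; return ``empty'' if $G$ contains a nonzero constant (equivalently, $G = \{1\}$ in reduced form), and return ``nonempty'' otherwise.

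The only thing to verify is that Gr\"obner basis computation can be carried out effectively when the base field is a number field $K$. This is standard: Buchberger's algorithm requires only the ability to perform the four field operations in $K$, and arithmetic in $K = \Q[t]/(f(t))$ is effective (addition and multiplication of polynomial representatives, reduction modulo $f$, and inversion via the extended Euclidean algorithm, since $f$ is irreducible so every nonzero residue is a unit). Termination of Buchberger's algorithm is guaranteed by the Hilbert basis theorem and does not depend on the particular base field.

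The step one should flag as the potential obstacle is computational cost rather than computability: Gr\"obner basis computation has doubly-exponential worst-case complexity, and the systems coming from Proposition \ref{prop:eqns} can be large, which is precisely the reason the authors remark that the procedure, while algorithmic, may not be practical. From the logical point of view, however, no obstacle remains once one has an effective number field $K$ and a Gr\"obner basis algorithm over $K$, so the lemma follows.
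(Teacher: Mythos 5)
Your proposal is correct and follows essentially the same route as the paper: compute a Gr\"obner basis of the defining ideal, test whether $1$ lies in it, and conclude by Hilbert's Nullstellensatz. The extra details you supply (representing the coefficient field as an effective number field $K=\Q[t]/(f(t))$ and noting that Buchberger's algorithm only needs effective field arithmetic) are just a fleshed-out version of the paper's one-line argument, so there is nothing further to add.
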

\begin{proof} Let $I$ be an ideal defining the affine variety $X$ (in some polynomial ring over $\Qbar$).  One can effectively compute a Gr\"obner basis for $I$ \cite[Chapter 15]{Eisenbud}. With a Gr\"obner basis at hand one can easily check whether $1$ is in the ideal or not, and conclude by Hilbert's Nullstellensatz accordingly if $X(\Qbar)$ is empty  or not.  
\end{proof}

\begin{corollary}\label{cor:elmn2}
There exists an algorithm that, given as input a set $S$ with a model computes as output whether $S$ is empty or not.
\end{corollary}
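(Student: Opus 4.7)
The plan is straightforward: the corollary is essentially a restatement of Lemma \ref{lem:elmn} once one unwinds what a ``model'' means. Recall from the discussion preceding Proposition \ref{prop:eqns} that a model for a set $S$ is, by definition, a finite collection of polynomial equations (with coefficients in some number field $K \subseteq \Qbar$) whose zero locus in an affine space $\mathbb{A}^N$ over $\Qbar$ is in natural bijection with $S$. In other words, giving $S$ ``with a model'' is the same as presenting an affine variety $X \subseteq \mathbb{A}^N_{\Qbar}$ together with an identification $X(\Qbar) = S$.

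First I would make this observation precise: the model exhibits generators for an ideal $I \subseteq \Qbar[x_1,\dots,x_N]$ (in fact, in $K[x_1,\dots,x_N]$ where $K$ is a number field of definition for the input), and $S$ is empty if and only if the affine variety $X = \Spec(\Qbar[x_1,\dots,x_N]/I)$ satisfies $X(\Qbar) = \emptyset$. In cases where the model encodes open conditions, such as those coming from the removal of diagonals in Lemma \ref{lem: configuration}, one incorporates the standard trick of introducing an auxiliary variable $z$ together with a relation $zf - 1 = 0$ for each non-vanishing condition $f \neq 0$. This produces an honest affine variety in a larger affine space whose $\Qbar$-points recover $S$.

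Having reduced to the affine case, I would then invoke Lemma \ref{lem:elmn} directly to decide whether $X(\Qbar)$ is empty: compute a Gr\"obner basis of $I$ (which is effective over any computable field, and in particular over $K$, hence over $\Qbar$), check whether $1$ lies in the ideal, and conclude by the Nullstellensatz. There is no real obstacle here beyond the bookkeeping of translating between ``a set with a model'' and ``an affine variety''; the genuine content was already carried by Lemma \ref{lem:elmn}.
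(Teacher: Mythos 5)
Your proposal is correct and takes essentially the same route as the paper, whose proof likewise derives the corollary immediately from Lemma \ref{lem:elmn} together with the definition of a model as a system of equations. Your additional remark about absorbing non-vanishing conditions via an auxiliary relation $zf-1=0$ is just the bookkeeping already built into how the paper's models are constructed (cf.\ Lemma \ref{lem: configuration}), so nothing further is needed.
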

\begin{proof}
Immediate from Lemma \ref{lem:elmn} and the definition of a model for a set $S$ as being given by equations.
\end{proof}


We are now ready to give the first proof of the main result of this note. 

\begin{proof}[First proof of Theorem \ref{mainresult:really}] Let $X$ be a curve over $\Qbar$. Let $d\geq 1$ be an integer, and let $\lambda$ be a ramification type of $d$. To prove the theorem,  it suffices to show that there is an algorithm which computes whether the set $\Bel_{d,\lambda}(X)$ of \Belyi\ maps of degree $d$ with ramification type $\lambda$ is empty. We explain how to use the above results to do this.

By Proposition \ref{prop:eqns}, we may (and do) compute a model for the set $\Bel_{d,\lambda}(X)$. By    Corollary \ref{cor:elmn2}, we can check algorithmically whether this set is empty or not (by using the   model we computed). This means that we can algorithmically check whether $X$ has a \Belyi\ map of degree $d$ with ramification type $\lambda$.
\end{proof}


\section{Second proof of Theorem \ref{mainresult:really}} \label{sec:secondproof}
In this section, we sketch a second proof of Theorem \ref{mainresult:really}. Instead of writing down equations for the Hurwitz space $\Bel_d(X)$, we enumerate all \Belyi\ maps and effectively compute equations to check for isomorphism between curves.  We saw this method already at work in Example \ref{exm:Fermatz}.

Let $X,Y$ be curves over $\Qbar$.  The functor $S \mapsto \Isom_S(X_S,Y_S)$ from the (opposite) category of schemes over $\Qbar$ to the category of sets is representable \cite[Theorem~1.11]{DeligneMumford} by a finite \'etale $\Qbar$-scheme $\underline{\Isom}(X,Y)$. Our next result shows that one can effectively compute a model for the (finite) set $\Isom(X,Y) = \underline{\Isom}(X,Y)(\Qbar)$ of isomorphisms from $X$ to $Y$. Equivalently, one can effectively compute equations for the finite \'etale $\Qbar$-scheme $\underline{\Isom}(X,Y)$.
 
 \begin{lemma}\label{lemma:alg_for_isoms0}
 There exists an algorithm that, given as input curves $X,Y$ over $\Qbar$ with at least one of $X$ or $Y$ of  genus at least $2$, computes a model for the set $\Isom(X,Y)$.
 \end{lemma}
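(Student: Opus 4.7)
The plan rests on the fact that for genus $g \geq 2$, any curve has an intrinsic closed embedding (e.g.\ the tricanonical one) and isomorphisms between such curves must arise from projective linear transformations.

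First, I would use Lemma \ref{lem:m2}(a) to compute $g(X)$ and $g(Y)$. By symmetry of $\Isom(X,Y)$ and $\Isom(Y,X)$, we may assume $g(X) \geq 2$; if $g(Y) \neq g(X)$, the set $\Isom(X,Y)$ is empty and we are done. So suppose $g := g(X) = g(Y) \geq 2$.

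Next, I would compute the tricanonical embedding of each of $X$ and $Y$. Using Lemma \ref{lem:m2}(b)--(c), compute a canonical divisor $K_Z$ on each $Z \in \{X,Y\}$ (for instance via the adjunction formula from the defining equations, or via the algorithms of Hess \cite{Hess}) and then a basis of $\Hzero(Z, \scrO_Z(3K_Z))$. Since $\deg 3K_Z = 6g-6 \geq 2g+1$ when $g \geq 2$, each tricanonical map is a closed embedding into $\PP^N$ with $N = 5g-6$, and by a Gr\"obner/elimination computation one extracts the defining ideals $I_X, I_Y \subseteq \Qbar[z_0,\dots,z_N]$.

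The key point is that every isomorphism $\phi\colon X \to Y$ satisfies $\phi^*\omega_Y \cong \omega_X$ canonically, so $\phi$ pulls a basis of $\Hzero(Y,3K_Y)$ back to one of $\Hzero(X,3K_X)$ and is therefore induced by a unique $\sigma \in \PGL_{N+1}(\Qbar)$ with $\sigma \cdot V(I_X) = V(I_Y)$. This gives a bijection
\[ \Isom(X,Y) \;\longleftrightarrow\; \{\sigma \in \PGL_{N+1} : \sigma \cdot V(I_X) = V(I_Y)\}. \]
To put equations on the right-hand side, I would cover $\PGL_{N+1}$ by standard affine charts and, in each chart, introduce indeterminates $\sigma_{ij}$ for the entries of $\sigma$. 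For each generator $h$ of $I_Y$, substitute $z_i \mapsto \sum_j \sigma_{ij} w_j$ to obtain a polynomial $h^{\sigma} \in \Qbar[\sigma_{ij}][w_0,\ldots,w_N]$, and impose $h^{\sigma} \in I_X \cdot \Qbar[\sigma_{ij}][w_0,\ldots,w_N]$: reducing $h^{\sigma}$ modulo a Gr\"obner basis of $I_X$ and setting the coefficients (as polynomials in the $w_j$) of the remainder to zero yields polynomial equations in the $\sigma_{ij}$. Because $V(I_X)$ and $V(I_Y)$ are irreducible of the same dimension, the containment $\sigma \cdot V(I_X) \subseteq V(I_Y)$ is automatically equality. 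Assembling the equations from the affine charts gives a model for $\Isom(X,Y)$.

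The main obstacle, if there is one, is bookkeeping: producing a canonical divisor algorithmically from the input equations and then correctly translating the geometric condition ``$\sigma$ carries $V(I_X)$ into $V(I_Y)$'' into explicit polynomial conditions on $(\sigma_{ij})$ via Gr\"obner reduction. Both steps are routine but need care; everything else (finiteness of the resulting scheme, irrelevance of the choice of basis of $\Hzero(Z,3K_Z)$ since we work modulo $\PGL_{N+1}$) is standard.
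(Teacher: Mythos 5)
Your proposal is correct and follows essentially the same route as the paper: compute the genera, embed both curves by the complete tricanonical system $|3K|$ (computed via Riemann--Roch/Hess and Gr\"obner bases), observe that any isomorphism is induced by a unique projective linear transformation carrying one embedded curve to the other, and translate this into polynomial conditions on the matrix entries by Gr\"obner reduction of the transformed generators. The extra details you supply (affine charts of $\PGL_{N+1}$, and the observation that containment of the images forces equality by irreducibility and equidimensionality) are consistent with, and slightly more explicit than, the paper's argument.
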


 \begin{proof} 
 We first compute the genera of $X,Y$ (as in the proof of Lemma \ref{lem:m}): if these are not equal, then we correctly return the empty set.  Otherwise, we compute a canonical divisor $K_X$ on $X$ by a Riemann--Roch calculation \cite{Hess} and the image of the pluricanonical map $\varphi\colon X \hookrightarrow \PP^N$ associated to the complete linear series on the very ample divisor $3K_X$ via Gr\"obner bases.  We repeat this with $Y$.  An isomorphism $\Isom(X,Y)$ induces via its action on canonical divisors an element of $\PGL_{N-1}(\Qbar)$ mapping the canonically embedded curve $X$ to $Y$, and vice versa, and so a model is provided by the equations that insist that a linear change of variables in $\PP^N$ maps the ideal of $X$ into the ideal of $Y$, which can again be achieved by Gr\"obner bases.
 \end{proof}

\begin{corollary} \label{cor:alg_for_isoms}
There exists an algorithm that, given as input maps of curves $f\colon X \to \PP^1$ and $h\colon Y \to \PP^1$ over $\Qbar$, computes as output whether there exists an isomorphism $\alpha\colon X \xrightarrow{\sim} Y$ such that $g=\alpha \circ f$ or not.

Similarly, there exists an algorithm that, given as input curves $X,Y$ over $\Qbar$, computes as output whether $X \simeq Y$ or not.
\end{corollary}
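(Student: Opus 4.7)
The plan is to reduce both assertions to the emptiness test of Corollary \ref{cor:elmn2} after computing a model for $\Isom(X,Y)$ (or an appropriate closed subscheme thereof). When at least one of $X,Y$ has genus at least $2$, this is exactly the content of Lemma \ref{lemma:alg_for_isoms0}; the remaining cases require explicit moduli-theoretic arguments in low genus.

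For the second assertion, I first compute $g(X)$ and $g(Y)$ using Lemma \ref{lem:m2}; if they differ, return \texttt{false}. If both are at least $2$, apply Lemma \ref{lemma:alg_for_isoms0} to produce a model for $\Isom(X,Y)$ and invoke Corollary \ref{cor:elmn2} to test emptiness. If both are $0$, each curve is isomorphic to $\PP^1_{\Qbar}$, and the algorithm returns \texttt{true}. If both are $1$, compute a Weierstrass model and the $j$-invariant of each curve via standard Riemann--Roch computations (available by Lemma \ref{lem:m2}), and test whether the two $j$-invariants agree in $\Qbar$, which over $\Qbar$ is equivalent to isomorphism.

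For the first assertion, I interpret the compatibility $f = h \circ \alpha$ as a closed condition on the scheme $\Isom(X,Y)$. When one of $X,Y$ has genus at least $2$, apply Lemma \ref{lemma:alg_for_isoms0} to obtain a model for $\Isom(X,Y)$, then augment it with the polynomial equations that express ``$h \circ \alpha = f$ as morphisms to $\PP^1$'' on affine charts; Corollary \ref{cor:elmn2} decides emptiness of the resulting system. In the remaining cases, $g(X) = g(Y) \in \{0,1\}$: for $g=0$ the scheme $\Isom(\PP^1, \PP^1) \simeq \PGL_2$ has explicit coordinates, and the compatibility condition becomes a finite system of polynomial relations in these entries; for $g=1$, after producing Weierstrass models, every element of $\Isom(X,Y)$ is the composition of an origin-preserving isomorphism with a translation by a point of $Y$, each parametrized by polynomial equations, and the compatibility condition again yields a polynomial system to which Corollary \ref{cor:elmn2} applies.

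The main obstacle is conceptual rather than computational: in genus $\leq 1$ there is no pluricanonical embedding available with which to run the argument of Lemma \ref{lemma:alg_for_isoms0}, so one must use ad hoc explicit descriptions of the automorphism scheme. The key observation that makes the low-genus steps tractable is that the compatibility condition $f = h \circ \alpha$ is strong: once imposed, it cuts the (positive-dimensional) scheme $\Isom(X,Y)$ down to a zero-dimensional subscheme (provided $\deg f = \deg h$, without which the answer is trivially \texttt{false}), so the final emptiness check proceeds uniformly by the Gr\"obner basis technique of Lemma \ref{lem:elmn}.
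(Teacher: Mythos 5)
Your proposal is correct and follows essentially the same route as the paper: split by genus, invoke Lemma \ref{lemma:alg_for_isoms0} when some genus is at least $2$, use explicit $\PGL_2$ coordinates in genus $0$, and use Weierstrass models in genus $1$, reducing everything to solvability of polynomial systems over $\Qbar$. The only (harmless) variations are that in genus at least $2$ you append the compatibility equations $h\circ\alpha=f$ to the model and test emptiness rather than enumerating the finite set $\Isom(X,Y)$, and in genus $1$ you parametrize $\Isom(X,Y)$ by translations composed with origin-preserving isomorphisms instead of the paper's $j$-invariant-plus-twists argument.
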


As remarked by Ngo--Nguyen--van der Put--Top \cite[Appendix]{Ngoetal}, the existence of an algorithm which decides whether two curves are isomorphic over an algebraically closed field is well-known. We include the following proof for the sake of completeness.

\begin{proof}[Proof of Corollary \ref{cor:alg_for_isoms}]
We compute the genera of $X,Y$ and again if these are different we correctly return as output \emph{no}.  Otherwise, let $g$ be the common genus.  

If $g=0$, we parametrize $X$ and $Y$ to get $X \simeq Y \simeq \PP^1$ and then ask for $\alpha \in \PGL_2(\Qbar)$ to map $f$ to $g$ in a manner analogous to the proof of Lemma \ref{lemma:alg_for_isoms0}.

If $g=1$, we loop over the preimages of $0 \in \PP^1$ in $X$ and $Y$ as origins, we compute Weierstrass equations via Riemann--Roch, and return \emph{no} if the $j$-invariants of $X,Y$ are unequal.  Otherwise, these $j$-invariants are equal and we compute an isomorphism $X \simeq Y$ of Weierstrass equations.  The remaining isomorphisms are twists, and we conclude by checking if there is a twist $\alpha$ of the common Weierstrass equation that maps $f$ to $g$.  

If $g \geq 2$, we call the algorithm in Lemma \ref{lemma:alg_for_isoms0}: we obtain a finite set of isomorphisms, and for each $\alpha \in \Isom(X,Y)$ we check if $h=\alpha \circ f$.

The second statement is proven similarly, ignoring the map.  
\end{proof}

We now give a second proof of our main result.
 
\begin{proof}[Second proof of Theorem \ref{mainresult:really}]
We first loop over integers $d \geq 1$ and all ramification types $\lambda$ of $d$.  For each $\lambda$, we count the number of permutation triples up to simultaneous conjugation with ramification type $\lambda$.  

We then compute the set of \Belyi\ maps of degree $d$ with ramification type $\lambda$ over $\Qbar$ as follows.  There are countably many number fields $K$, and they may be enumerated by a minimal polynomial of a primitive element.  For each number field $K$, there are countably many curves $X$ over $K$ up to isomorphism over $\Qbar$, and this set is computable: for $g=0$ we have only $\PP_K^1$, for $g=1$ we can enumerate $j$-invariants, and for $g \geq 2$ we can enumerate candidate pluricanonical ideals (by Petri's theorem).  Finally, for each curve $X$ over $K$, there are countably many maps $f\colon X \to \PP^1$, and these can be enumerated using Lemma \ref{lem:m}.  Diagonalizing, we can enumerate the entire countable set of such maps.  For each such map $f$, using Gr\"obner bases we can compute the degree and ramification type of $f$, and in particular detect if $f$ is a \Belyi\ map of degree $d$ with ramification type $\lambda$.  Along the way in this (ghastly) enumeration, we can detect if two correctly identified \Belyi\ maps are isomorphic using Corollary \ref{cor:alg_for_isoms}.  Having counted the number of isomorphism classes of such maps, we know when to stop with the complete set of such maps.

Now, to see whether $\Bel_{d,\lambda}(X)$ is nonempty, we just check  using Corollary \ref{cor:alg_for_isoms} whether $X$ is isomorphic to one of the source curves in the set of all \Belyi\ maps of degree $d$ and ramification type $\lambda$.
\end{proof}

\section{The Fermat curve of degree four} \label{section:fermat}

In this section we prove the following proposition, promised in Example \ref{exm:Fermatz}.

\begin{proposition}\label{prop:fermat}
The \Belyi\ degree of the curve $X\colon x^4+y^4 = z^4$ is equal to $8$.
\end{proposition}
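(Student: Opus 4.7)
\emph{Upper bound.} I will construct an explicit Belyi map of degree~$8$. First consider the map $\pi\colon X_4 \to \PP^1$ given by $(x:y:z) \mapsto (y:z)$; from $x^4 = z^4\bigl(1-(y/z)^4\bigr)$ one reads off that $\pi$ has degree~$4$ and is branched exactly over the four points $\{\pm 1, \pm i\}$, with a single totally ramified preimage above each. Now compose with the degree~$2$ map $\psi(t) = -(t-1)^2/(t+1)^2$, whose unique critical points are $t=1$ (value $0$) and $t=-1$ (value $\infty$); a direct check gives $\psi(\pm i) = 1$, so $\psi$ is itself a Belyi map of degree~$2$ and sends the branch locus of $\pi$ into $\{0,1,\infty\}$. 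Consequently $\psi \circ \pi$ is a Belyi map on $X_4$ of degree~$8$ (with ramification type $((8),(4,4),(8))$), which shows $\Beldeg(X_4) \leq 8$.

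\emph{Lower bound.} By Proposition~\ref{prop:rh} we already have $\Beldeg(X_4) \geq 2g+1 = 7$, so it suffices to rule out a Belyi map $\phi\colon X_4 \to \PP^1$ of degree exactly~$7$. The Riemann--Hurwitz formula \eqref{eqn:riemmanhurwitzbel} forces $r_0 + r_1 + r_\infty = d - 2g + 2 = 3$, hence $r_* = 1$ for each $* \in \{0,1,\infty\}$ and ramification type $\lambda = ((7),(7),(7))$. The monodromy then consists of three $7$-cycles $\sigma_0, \sigma_1, \sigma_\infty \in S_7$ with $\sigma_0 \sigma_1 \sigma_\infty = 1$, generating a transitive subgroup $G \leq S_7$. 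Since $7$-cycles are even permutations, $G \leq A_7$; and because in each of the transitive subgroups $D_7$, $F_{21}$, $F_{42}$ the elements of order~$7$ all lie in the unique Sylow $C_7$, inspection of the seven transitive subgroups of $S_7$ leaves $G \in \{C_7, \PSL_2(7), A_7\}$. If $G = C_7$ the cover is Galois and yields an embedding $C_7 \hookrightarrow \Aut(X_4)$; but $|\Aut(X_4)| = 96 = 2^5 \cdot 3$ is coprime to~$7$, a contradiction.

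\emph{Remaining cases and main obstacle.} For $G \in \{\PSL_2(7), A_7\}$ the cover is non-Galois and the automorphism obstruction alone does not suffice. Here I would invoke the algorithm of Proposition~\ref{prop:eqns} with input $X = X_4$, $d = 7$, $\lambda = ((7),(7),(7))$: it writes down explicit polynomial equations whose $\QQbar$-solutions correspond bijectively to Belyi maps on $X_4$ of this degree and type, and Corollary~\ref{cor:elmn2} reduces emptiness of the resulting algebraic set to a Gr\"obner basis computation. Equivalently, a Frobenius character count in $S_7$ shows that, up to simultaneous conjugation, there are exactly $30$ abstract Belyi covers of degree~$7$ with ramification type $((7),(7),(7))$; one may enumerate these and verify, using Lemma~\ref{lemma:alg_for_isoms0}, that none has source curve isomorphic to $X_4$. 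Carrying out this last explicit verification is the main obstacle of the proof, but it is finite and algorithmic, and once completed it combines with the upper bound to yield $\Beldeg(X_4) = 8$.
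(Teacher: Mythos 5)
Your upper bound and the first part of your lower bound are correct and essentially parallel to the paper: your composition $\psi\circ\pi$ with $\pi(x:y:z)=(y:z)$ and $\psi(t)=-(t-1)^2/(t+1)^2$ is a valid degree~$8$ \Belyi\ map (the paper instead factors through the genus~$1$ quotient $z^2=x^4+1$, giving $(x:y:z)\mapsto x^2+z^2$, but either construction works); the reduction via Riemann--Hurwitz to the unique type $((7),(7),(7))$, the restriction of the monodromy group to $\{C_7,\PSL_2(\FF_7),A_7\}$, and the elimination of the cyclic case via $7\nmid\#\Aut(X)=96$ all match the paper's argument.

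However, there is a genuine gap exactly where you flag it: the cases $G\in\{\PSL_2(\FF_7),A_7\}$ are not actually ruled out. You defer them to the algorithm of Proposition \ref{prop:eqns} (whose Gr\"obner system, as the paper's own Example illustrates, already has on the order of $33$ variables and $41$ equations in the generic stratum and is not carried out there either), or to an enumeration of all $\approx 30$ abstract covers followed by isomorphism tests via Lemma \ref{lemma:alg_for_isoms0} --- and you explicitly state that this verification is the main obstacle and is left undone. The paper closes this case with a short counting argument that your proposal is missing: compute the centralizers of the $2+23=25$ noncyclic permutation triples of type $((7),(7),(7))$ and observe they are all trivial, so such a \Belyi\ map admits no nontrivial automorphism as a cover; moreover no $\alpha\in\Aut(X)$ of order coprime to $7$ can satisfy $\phi\circ\alpha=\phi$ for a map $\phi$ of prime degree $7$ (the quotient would give an intermediate curve). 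Hence, if $X$ carried such a map $\phi$, the $96$ maps $\phi\circ\alpha$ for $\alpha\in\Aut(X)$ would be pairwise nonisomorphic \Belyi\ maps of degree $7$, genus $3$ and type $((7),(7),(7))$, contradicting the fact that there are only $25$ isomorphism classes of these in total. Adding this argument (or genuinely executing one of the computations you invoke, e.g.\ checking against the known two $\PSL_2(\FF_7)$ curves over $\QQ(\sqrt{-7})$ and the $A_7$ covers) is needed to complete your proof.
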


\begin{proof}
The curve $X$ is a canonically embedded curve of genus $3$.  By Proposition \ref{prop:rh}, we have $\Bdeg(X) \geq 7$.  On the other hand, $X$ maps to the genus $1$ curve with affine model $z^2=x^4+1$ and $j$-invariant $1728$, and this latter curve has a \Belyi\ map of degree $4$ taking the quotient by its automorphism group of order $4$ as an elliptic curve, equipped with a point at infinity.  Composing the two, we obtain a \Belyi\ map of degree $8$ on $X$ defined by $(x:y:z) \mapsto x^2+z^2$; therefore $\Beldeg(X) \leq 8$. 
So to show $\Beldeg(X)=8$, it suffices to rule out the existence of a \Belyi\ map of degree $7$.  

By enumeration of partitions and the Riemann--Hurwitz formula, we see that the only partition triple of $7$ that gives rise to a \Belyi\ map $\phi\colon X \to \PP^1$ with $X$ of genus $3$ is $(7,7,7)$.  
By enumeration of permutation triples up to simultaneous conjugation, we compute that the \Belyi\ maps of degree $7$ and genus $3$ have three possible monodromy groups: cyclic of order $7$, the simple group $\GL_3(\FF_2) \simeq \PSL_2(\FF_7)$ of order $168$, or the alternating group $A_7$.  We rule these out by consideration of automorphism groups.

As in Lemma \ref{lemma:alg_for_isoms0} but instead using the canonical embedding as $K_X$ is already ample, we have $\Aut(X) \leq \Aut(\PP^2)=\PGL_3(\Qbar)$, and a direct calculation yields that $\Aut(X) \simeq S_3 \rtimes (\ZZ/4\ZZ)^2$ and $\#\Aut(X)=96$.  (For the automorphism group of the general Fermat curve $X_n$ of degree $n \geq 4$, see Leopoldt \cite{Leopoldt} or Tzermias \cite{Tzermias}: they prove that $\Aut(X_n) \simeq S_3 \rtimes (\ZZ/n\ZZ)^2$.)  

The cyclic case is a geometrically Galois map, but $X$ does not have an automorphism of order $7$, impossible.  For the two noncyclic cases, computing the centralizers of the $2+23=25$ permutation triples up to simultaneous conjugation, we conclude that these \Belyi\ maps have no automorphisms.  An automorphism $\alpha \in \Aut(X)$ of order coprime to $7$ cannot commute with a \Belyi\ map of prime degree $7$ because the quotient by $\alpha$ would be an intermediate curve.  So if $X$ had a \Belyi\ map of degree $7$, there would be $96$ nonisomorphic such \Belyi\ maps, but that is too many.  
\end{proof}

\begin{remark}
The above self-contained proof works because of the large automorphism group on the Fermat curve, and it seems difficult to make this strategy work for an arbitrary curve.  
\end{remark}

To illustrate how our algorithms work, we now show how they can be used to give two further proofs of Proposition \ref{prop:fermat}.

\begin{example}
We begin with the first algorithm exhibited in Proposition \ref{prop:eqns}.  We show that $X$ has no \Belyi\ map of degree $7$ with explicit equations to illustrate our method; we finish the proof as above.

We take the divisor $D_0=[D_{01}]$ where $D_{01}=(1:0:1)\in X(\Q)$ and $\deg D_0=d_0=1$.  We write rational functions on $X$ as ratios of polynomials in $\Q[x,y]$, writing $x,y$ instead of $x/z,y/z$.  According to \eqref{ineq}, taking $\scrL=\scrO_X(D_0)$ we need $t-7+1-3 \geq 1$, so we take $t=10$.  By a computation in \textsc{Magma} \cite{Magma}, the space $\Hzero(X,\scrL^{\otimes 10})$ has dimension $n=8$ and basis
\begin{equation}
\begin{aligned}
g_1 &= 1 \\
g_2 &= \frac{x^3+x^2+x+1}{y^3} \\
g_3 &= g_2/y \\
g_4 &= \frac{4(x^3+x^2+x+1) - x^2y^4 - 2xy^4 - 3y^4}{4y^6} \\
g_5 &= g_5/y \\
g_6 &= g_6/y \\
g_7 &= \frac{16(x^3+x^2+x+1)-6x^3y^4 - 10x^2y^4 + xy^8 - 14xy^4 + 3y^8 - 18y^4}{6y^9} \\
g_8 &= \frac{32(x^3+x^2+x+1) - 3x^2y^8 - 8x^2y^4 - 4xy^8 - 16xy^4 - 3y^8 - 24y^4}{32y^{10}}
\end{aligned}
\end{equation}
We compute that $\ord_{D_0} g_i = 0,-3,-4,-6,-7,-8,-9,-10$.

The general case is where $a_8b_8 \neq 0$, for which $k=\ell=8$ and we may take
\[ \phi=\frac{a}{b}=\frac{\sum_{i=1}^{8} a_i g_i}{\sum_{i=1}^{8} b_i g_i} \]
so we let $b_8=1$ and $m=t=10$.  As we already saw in Example \ref{exm:Fermatz}, the only ramification type possible is $\lambda=(7,7,7)$, with $r_0=r_1=r_\infty=1$ and $\lambda_0=\lambda_1=\lambda_\infty=7$.  

We have $md_0-d=10-7=3$, so we consider the partitions of $3$.  We start with the trivial partition $\mu=\mu_1=3$ with $s=1$.  Then the equations \eqref{bel} read, dropping subscripts: we want distinct points $P,Q,R \in X(\Qbar)$ such that $\opdiv(a) \geq 7[P]+3[Y]$ and $\opdiv(a-b) \geq 7[Q]+3[Y]$ and $\opdiv(b) \geq 7[R]+3[Y]$.  

Continuing in the general case, the points $P,Q,R,Y,D_{0}$ are all distinct, each such point belongs to the affine open with $z \neq 0$, and furthermore $x-x(Z)$ is a uniformizer at $Z$ for each point $Z=P,\dots,D_{0}$.  

The conditions for the point $P$ we write as follows: letting $P=(x_{P}:y_{P}:1)$ with unknowns $x_{P},y_{P}$, we add the equation $x_{P}^4+y_{P}^4=1$ so that $P$ lies on the curve $X$, and then (by Taylor expansion) to ensure $\ord_{P} a \geq 7$ we add the equations 
\begin{equation} 
\frac{\partial^j a}{\partial x^j}(x_{P},y_{P}) = \sum_{i=1}^8 a_i \frac{\partial^j g_i}{\partial x^j}(x_{P},y_{P}) = 0
\end{equation}
for $j=0,\dots,6$, and using implicit differentiation on the defining equation of $X$ to obtain $\displaystyle{\frac{\textup{d}y}{\textup{d}x}}=-\frac{x^3}{y^3}$.  For example, the case $j=1$ (asserting that $a$ vanishes to order at least $2$ at $P$, assuming that $a(P)=0$) is 
\begin{equation}
\begin{aligned}
&(3x_{P}^6y_{P}^7 + 3x_{P}^5y_{P}^7 + 3x_{P}^4y_{P}^7 + 3x_{P}^3y_{P}^7 + 3x_{P}^2y_{P}^{13} + 2x_{P}y_{P}^{13} + y_{P}^{13})a_2 \\
    & + (4x_{P}^6y_{P}^6 + 4x_{P}^5y_{P}^6 + 4x_{P}^4y_{P}^6
    + 4x_{P}^3y_{P}^6 + 3x_{P}^2y_{P}^{12} + 2x_{P}y_{P}^{12} + y_{P}^{12})a_3 \\
    &+ 
    (6x_{P}^6y_{P}^4 - \tfrac{11}{2}x_{P}^5y_{P}^8 + 6x_{P}^5y_{P}^4 - 7x_{P}^4y_{P}^8 + 
    6x_{P}^4y_{P}^4  \\
    &\qquad - \tfrac{17}{2}x_{P}^3y_{P}^8 + 6x_{P}^3y_{P}^4 + 3x_{P}^2y_{P}^{10} + 
    4x_{P}y_{P}^{10} + 2y_{P}^{10})a_4 \\
    &+ (7x_{P}^6y_{P}^3 - \tfrac{23}{4}x_{P}^5y_{P}^7 + 
    7x_{P}^5y_{P}^3 - \tfrac{15}{2}x_{P}^4y_{P}^7 + 7x_{P}^4y_{P}^3  \\
    &\qquad - \tfrac{37}{4}x_{P}^3y_{P}^7 + 
    7x_{P}^3y_{P}^3 + 3x_{P}^2y_{P}^9 + 4x_{P}y_{P}^9 + 2y_{P}^9)a_5  \\
    &+ (8x_{P}^6y_{P}^2 - 6x_{P}^5y_{P}^6 + 8x_{P}^5y_{P}^2 - 8x_{P}^4y_{P}^6 + 8x_{P}^4y_{P}^2 \\ 
    &\qquad - 10x_{P}^3y_{P}^6 + 8x_{P}^3y_{P}^2 + 3x_{P}^2y_{P}^8 + 4x_{P}y_{P}^8 + 2y_{P}^8)a_6 \\
    &+
    (5x_{P}^6y_{P}^5 - 24x_{P}^6y_{P} + 11x_{P}^5y_{P}^5 - 24x_{P}^5y_{P} - 
    \tfrac{19}{2}x_{P}^4y_{P}^9 + 17x_{P}^4y_{P}^5 - 24x_{P}^4y_{P} \\
    &\qquad - \tfrac{25}{2}x_{P}^3y_{P}^9 + 
    23x_{P}^3y_{P}^5 - 24x_{P}^3y_{P} + 6x_{P}^2y_{P}^7 + 4x_{P}y_{P}^7 + 3y_{P}^7)a_7 \\
    &+ 
    (10x_{P}^6 - \tfrac{143}{16}x_{P}^5y_{P}^8 - \tfrac{13}{2}x_{P}^5y_{P}^4 + 10x_{P}^5 - 
    \tfrac{37}{4}x_{P}^4y_{P}^8 - 9x_{P}^4y_{P}^4 + 10x_{P}^4 \\
    &\qquad - \tfrac{143}{16}x_{P}^3y_{P}^8 - 
    \tfrac{23}{2}x_{P}^3y_{P}^4 + 10x_{P}^3 + 3x_{P}^2y_{P}^6 + 6x_{P}y_{P}^6 + 3y_{P}^6)a_8 \\
    &=0.
\end{aligned}
\end{equation}
The equations for the points $Q,R$ are the same, with $a-b$ and $b$ in place of $a$, and again for $Y$ but with $a$ \emph{and} $b$ in place of $a$.  We must also impose the conditions that the points are distinct and that $a_8 \neq 0$: for example, to say $P \neq Q$ we introduce the variable $z_{PQ}$ and the equation 
\begin{equation} 
((x_P-x_Q)z_{PQ}-1)((y_P-y_Q)z_{PQ}-1) = 0.
\end{equation}
In this general case, we end up with $8+7+2\cdot 4 + 10 = 33$ variables 
\begin{equation} 
a_1,\dots,a_8,b_1,\dots,b_7,x_{P_1},y_{P_1},x_{Q_1},y_{Q_1},x_{R_1},y_{R_1},x_{Y_1},y_{Y_1},z_{PQ},\dots,z_{RD_0} 
\end{equation}
and $8\cdot 3 + 7 + 10 = 41$ equations.  

Moving on from the general case, we consider also the case where $x$ does not yield a uniformizer for one of the points; that one of the points lies along the line $z=0$; or that some of the points coincide.  After this, we have completed the case $k=\ell=8$, and consider more degenerate cases $(k,\ell)$.  

Finally, we repeat the entire process again with the partitions $\mu=2+1$ and $\mu=1+1+1$.
\end{example}


We conclude by a version of the second proof of our main result, explained in section \ref{sec:secondproof}.

\begin{example}
We compute each \Belyi\ map of degree $7$ and genus $3$ and show that no source curve is isomorphic to $X$.  

As above, there are three cases to consider.  The first cyclic case is the map in Example \ref{example: Belyi is n} above, followed by its post-composition by automorphisms of $\PP^1$ permuting $\{0,1,\infty\}$.  But the curve $y^2-y=x^7$ has an automorphism of order $7$, and $X$ does not.  

The genus $3$ \Belyi\ maps of degree $7$ in the noncyclic case with $2$ permutation triples up to conjugation was computed by Klug--Musty--Schiavone--Voight \cite[Example 5.27]{KMSV}: using the algorithm in Lemma \ref{lemma:alg_for_isoms0} we find that $X$ is not   isomorphic to either source curve.  Alternatively, these two curves are minimally defined over $\Q(\sqrt{-7})$ (and are conjugate under $\Gal(\Q(\sqrt{-7})\,|\,\Q)$), whereas $X$ can be defined over $\Q$.  

In the third case, we apply the same argument, appealing to the exhaustive computation of \Belyi\ maps of small degree by Musty--Schiavone--Voight \cite{MSVpreprint} and again checking for isomorphism.
\end{example}

\bibliography{refsbel}{}

\begin{thebibliography}{10}

\bibitem{ACGH1}
E.~Arbarello, M.~Cornalba, P.~A. Griffiths, and J.~Harris.
\newblock {\em Geometry of algebraic curves. {V}ol. {I}}, volume 267 of {\em
  Grundlehren der Mathematischen Wissenschaften [Fundamental Principles of
  Mathematical Sciences]}.
\newblock Springer-Verlag, New York, 1985.

\bibitem{BCG}
Ingrid Bauer, Fabrizio Catanese, and Fritz Grunewald.
\newblock Faithful actions of the absolute {G}alois group on connected
  components of moduli spaces.
\newblock {\em Invent. Math.}, 199(3):859--888, 2015.

\bibitem{Beckmann}
Sybilla Beckmann.
\newblock Ramified primes in the field of moduli of branched coverings of
  curves.
\newblock {\em J. Algebra}, 125(1):236--255, 1989.

\bibitem{Belyi1}
G.~V. Bely\u\i.
\newblock Galois extensions of a maximal cyclotomic field.
\newblock {\em Izv. Akad. Nauk SSSR Ser. Mat.}, 43(2):267--276, 479, 1979.

\bibitem{Belyi2}
G.~V. Bely\u\i.
\newblock Another proof of the three-point theorem.
\newblock {\em Mat. Sb.}, 193(3):21--24, 2002.

\bibitem{BertinRomagny}
Jos\'e Bertin and Matthieu Romagny.
\newblock Champs de {H}urwitz.
\newblock {\em M\'em. Soc. Math. Fr. (N.S.)}, (125-126):219, 2011.

\bibitem{BiSt}
Y.~F. Bilu and M.~Strambi.
\newblock Quantitative {R}iemann existence theorem over a number field.
\newblock {\em Acta Arith.}, 145(4):319--339, 2010.

\bibitem{Magma}
Wieb Bosma, John Cannon, and Catherine Playoust.
\newblock The {M}agma algebra system. {I}. {T}he user language.
\newblock {\em J. Symbolic Comput.}, 24(3-4):235--265, 1997.
\newblock Computational algebra and number theory (London, 1993).

\bibitem{Coates}
J.~Coates.
\newblock Construction of rational functions on a curve.
\newblock {\em Proc. Cambridge Philos. Soc.}, 68:105--123, 1970.

\bibitem{DeligneMumford}
P.~Deligne and D.~Mumford.
\newblock The irreducibility of the space of curves of given genus.
\newblock {\em Inst. Hautes \'Etudes Sci. Publ. Math.}, (36):75--109, 1969.

\bibitem{Eisenbud}
D.~Eisenbud.
\newblock {\em Commutative algebra}, volume 150 of {\em Graduate Texts in
  Mathematics}.
\newblock Springer-Verlag, New York, 1995.
\newblock With a view toward algebraic geometry.

\bibitem{Torres}
G.~Gonz\'alez-Diez and D.~Torres-Teigell.
\newblock Non-homeomorphic {G}alois conjugate {B}eauville structures on {${\rm
  PSL}(2,p)$}.
\newblock {\em Adv. Math.}, 229(6):3096--3122, 2012.

\bibitem{Hess}
F.~Hess.
\newblock Computing {R}iemann-{R}och spaces in algebraic function fields and
  related topics.
\newblock {\em J. Symbolic Comput.}, 33(4):425--445, 2002.

\bibitem{J}
Ariyan Javanpeykar.
\newblock Polynomial bounds for {A}rakelov invariants of {B}elyi curves.
\newblock {\em Algebra Number Theory}, 8(1):89--140, 2014.
\newblock With an appendix by Peter Bruin.

\bibitem{JvK}
Ariyan Javanpeykar and Rafael von K\"anel.
\newblock Szpiro's small points conjecture for cyclic covers.
\newblock {\em Doc. Math.}, 19:1085--1103, 2014.

\bibitem{Khadjavi}
L.S. Khadjavi.
\newblock An effective version of {B}elyi's theorem.
\newblock {\em J. Number Theory}, 96(1):22--47, 2002.

\bibitem{KMSV}
Michael Klug, Michael Musty, Sam Schiavone, and John Voight.
\newblock Numerical calculation of three-point branched covers of the
  projective line.
\newblock {\em LMS J. Comput. Math.}, 17(1):379--430, 2014.

\bibitem{Leopoldt}
Heinrich-Wolfgang Leopoldt.
\newblock \"uber die {A}utomorphismengruppe des {F}ermatk\"orpers.
\newblock {\em J. Number Theory}, 56(2):256--282, 1996.

\bibitem{Litcanu}
R.~Li{\c{t}}canu.
\newblock Propri\'et\'es du degr\'e des morphismes de {B}elyi.
\newblock {\em Monatsh. Math.}, 142(4):327--340, 2004.

\bibitem{Mochizuki}
Shinichi Mochizuki.
\newblock The geometry of the compactification of the {H}urwitz scheme.
\newblock {\em Publ. Res. Inst. Math. Sci.}, 31(3):355--441, 1995.

\bibitem{MSVpreprint}
Michael Musty, Sam Schiavone, and John Voight.
\newblock Computing a database of {B}elyi maps.
\newblock preprint.

\bibitem{Ngoetal}
L.~X.~Chau Ngo, K.~A. Nguyen, M.~van~der Put, and J.~Top.
\newblock Equivalence of differential equations of order one.
\newblock {\em J. Symbolic Comput.}, 71:47--59, 2015.

\bibitem{RomagnyWewers}
Matthieu Romagny and Stefan Wewers.
\newblock Hurwitz spaces.
\newblock In {\em Groupes de {G}alois arithm\'etiques et diff\'erentiels},
  volume~13 of {\em S\'emin. Congr.}, pages 313--341. Soc. Math. France, Paris,
  2006.

\bibitem{Schneps}
Leila Schneps, editor.
\newblock {\em The {G}rothendieck theory of dessins d'enfants}, volume 200 of
  {\em London Mathematical Society Lecture Note Series}.
\newblock Cambridge University Press, Cambridge, 1994.
\newblock Papers from the Conference on Dessins d'Enfant held in Luminy, April
  19--24, 1993.

\bibitem{SijslingVoight}
J.~Sijsling and J.~Voight.
\newblock On computing {B}elyi maps.
\newblock In {\em Num\'ero consacr\'e au trimestre ``{M}\'ethodes
  arithm\'etiques et applications'', automne 2013}, volume 2014/1 of {\em Publ.
  Math. Besan\c{c}on Alg\`ebre Th\'eorie Nr.}, pages 73--131. Presses Univ.
  Franche-Comt\'e, Besan\c{c}on, 2014.

\bibitem{Sturmfels}
Bernd Sturmfels.
\newblock {\em Algorithms in invariant theory}.
\newblock Texts and Monographs in Symbolic Computation. Springer, second
  edition, 2008.

\bibitem{Szamuely}
Tam\'as Szamuely.
\newblock {\em Galois groups and fundamental groups}, volume 117 of {\em
  Cambridge Studies in Advanced Mathematics}.
\newblock Cambridge University Press, Cambridge, 2009.

\bibitem{Tzermias}
Pavlos Tzermias.
\newblock The group of automorphisms of the {F}ermat curve.
\newblock {\em J. Number Theory}, 53(1):173--178, 1995.

\bibitem{Zapponi}
Leonardo Zapponi.
\newblock On the {B}elyi degree(s) of a curve defined over a number field.
\newblock \texttt{arXiv:0904.0967}, 2009.

\end{thebibliography}
\bibliographystyle{plain}

\end{document}